\documentclass[11pt,a4paper]{amsart}

\usepackage{amsmath,amssymb,amsthm,mathtools,mathrsfs}
\usepackage[margin=1.1in]{geometry}
\usepackage{microtype}
\usepackage{tikz}
\usetikzlibrary{arrows.meta,positioning}
\usepackage[colorlinks=true,linkcolor=blue,citecolor=blue,urlcolor=blue]{hyperref}

\newtheorem{theorem}{Theorem}[section]
\newtheorem{lemma}[theorem]{Lemma}
\newtheorem{proposition}[theorem]{Proposition}
\newtheorem{corollary}[theorem]{Corollary}
\theoremstyle{definition}
\newtheorem{definition}[theorem]{Definition}
\theoremstyle{remark}
\newtheorem{remark}[theorem]{Remark}

\numberwithin{equation}{section}

\newcommand{\Z}{\mathbb Z}
\newcommand{\R}{\mathbb R}
\newcommand{\Pp}{\mathbb P}
\newcommand{\Ee}{\mathbb E}
\newcommand{\one}{\mathbf 1}
\newcommand{\wtS}{\widetilde S_n}
\newcommand{\cD}{\mathcal D}
\newcommand{\cS}{\mathcal S}
\newcommand{\cH}{\mathcal H}
\newcommand{\cF}{\mathcal F}
\newcommand{\Law}{\operatorname{Law}}
\newcommand{\TV}{\mathrm{TV}}
\newcommand{\id}{\mathrm{id}}
\newcommand{\floor}[1]{\left\lfloor #1\right\rfloor}
\newcommand{\ceil}[1]{\left\lceil #1\right\rceil}
\newcommand{\abs}[1]{\left|#1\right|}
\newcommand{\norm}[1]{\left\lVert #1\right\rVert}
\newcommand{\pospart}[1]{\left(#1\right)_{+}}
\newcommand{\modn}[1]{\overline{#1}}

\title[Periodic ASEP and affine Hecke walks]
{Periodic ASEP and random walks on affine Hecke algebra}

\author{Alexey Bufetov}
\address{Institute of Mathematics, Leipzig University, Germany}
\email{alexey.bufetov@gmail.com}

\author{Nimisha Pahuja}
\address{ICTS-TIFR, Bengaluru, India}
\email{nimisha.pahuja@icts.res.in}

\date{}

\begin{document}
	
	\maketitle
	
	\begin{abstract}
		We study the large time behavior of a periodic ASEP with fixed period started from the step initial condition. We establish the limit shape theorem and the asymptotic distribution of a single second class particle. Our single-species results extend the previous results of Lam and Ayyer-Linusson about periodic TASEP to the ASEP case. These results can also be interpreted as describing the limit behavior of a random walk on affine Hecke algebra. 
	\end{abstract}
	
	\section{Introduction}
	\label{sec:Intro}
	
	The asymmetric simple exclusion process (ASEP) is one of the most extensively
	studied interacting particle systems, owing both to its natural role as a
	model of nonequilibrium statistical mechanics and to its deep connections
	with several areas of mathematics.  Among these connections is a rich
	algebraic structure; the aspect most relevant to the present paper is the relation
	between ASEP and Hecke algebras.
	
	The connection between asymmetric exclusion processes and Hecke algebras
	goes back to algebraic formulations of one-dimensional reaction--diffusion
	systems; see, in particular, \cite{AlcarazDrozHenkelRittenberg}.  In
	\cite{BufetovHecke}, it was shown that a large class of multispecies
	interacting particle systems, including ASEP, can be viewed as random walks
	on Hecke algebras.  The aim of the present paper is to study the affine
	type~$A$ version of this construction and its long-time behavior in greater
	detail.  As we will see, this setup is naturally related to ASEP on a ring.
	
	Fix $n\ge2$ and $q\in[0,1)$.  Our basic process is the continuous-time
	random walk generated by stochastic multiplication in the affine Hecke
	algebra $\mathscr H_q(\wtS)$.  A clock associated with a simple reflection
	$s_r$ acts simultaneously on all pairs of positions congruent to $r,r+1$
	modulo $n$: a length-increasing exchange occurs with rate $1$, whereas the
	reverse exchange occurs with rate $q$.  Its one- and multispecies
	projections are the periodic ASEPs described in more detail below. 
	
	For $q=0$ the study of this process was
	pioneered in \cite{LamAffineShape}, where a law of large numbers for
	the affine $0$-Hecke walk was proved and an explicit limiting shape was conjectured. This
	conjecture, together with several related ones, was subsequently proved in
	\cite{AyyerLinussonLam}.  See also \cite{AasAyyerLinussonPotka} for TASEP in
	other classical affine types.
	
	The present paper extends these limit-shape results from the totally
	asymmetric case $q=0$ to ASEP with $0<q<1$.  Our main results are
	Theorem~\ref{thm:limit-shape-step}, Theorem~\ref{thm:second-class-step}, and Theorem~\ref{thm:shifted-second-class}. Let us formulate them.
	
	\begin{figure}[ht]
		\centering
		\includegraphics[width=0.7\textwidth]{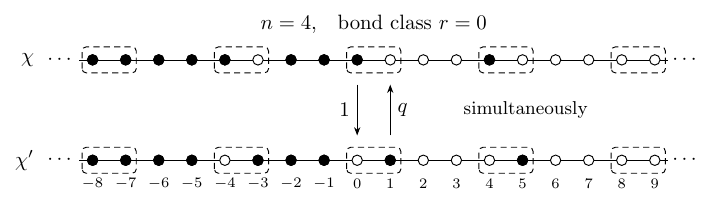}
		\caption{Update of periodic ASEP.}
		\label{fig:picture}
	\end{figure}
	
	The single-species periodic ASEP is a process on configurations
	$\chi:\Z\to\{0,1\}$ that are nonincreasing along every arithmetic
	progression modulo $n$.  For each $r\in\Z/n\Z$, an independent rate-one
	clock simultaneously updates all bonds
	\[
	(r+kn,r+kn+1),\qquad k\in\Z.
	\]
	At a ringing time, pairs of the form $10$ are exchanged to $01$, while
	pairs of the form $01$ are exchanged to $10$ with probability $q$; equal
	pairs remain unchanged.  The admissibility condition ensures that all
	unequal pairs updated by the same clock have the same orientation.  We
	start from the step initial condition
	\begin{equation}\label{eq:introduction-step-initial-condition}
		\chi_0(x)=\one_{\{x\le0\}},\qquad x\in\Z,
	\end{equation}
	and denote the resulting process by $(\chi_t)_{t\ge0}$.  For $x\in\Z$, let
	\begin{equation}\label{eq:introduction-particle-count}
		N_t(x)=\sum_{y>x}\chi_t(y)
	\end{equation}
	be the number of particles strictly to the right of $x$.  Define the
	characteristic velocities
	\begin{equation}\label{eq:introduction-velocities}
		v_k=\frac{(1-q)(n+1-2k)}{n-1},
		\qquad k\in[n],
	\end{equation}
	and the piecewise-linear function
	\begin{equation}\label{eq:introduction-limit-shape}
		\cH(u)=\frac1n\sum_{k=1}^n\pospart{v_k-u},
		\qquad u\in\R, \qquad \pospart{x}:=\max(x,0).
	\end{equation}
	Theorem~\ref{thm:limit-shape-step}(ii) states that, almost surely,
	\begin{equation}\label{eq:introduction-uniform-height-limit}
		\sup_{u\in\R}
		\abs{\frac{N_t(\floor{ut})}{t}-\cH(u)}
		\longrightarrow0.
	\end{equation}
	
	We next formulate our second-class-particle theorems.  Use the ordered
	alphabet $\{1<2<+\infty\}$, where $1$ denotes a first class particle, $2$ a
	second class particle, and $+\infty$ a hole.  The same periodic clocks are
	used, with an adjacent pair $ab$ exchanged to $ba$ at rate $1$ when $a<b$
	and at rate $q$ when $a>b$.  For the step initial condition with one second
	class particle,
	\begin{equation}\label{eq:introduction-second-class-step}
		\eta_0(x)=
		\begin{cases}
			1,&x\le-1,\\
			2,&x=0,\\
			+\infty,&x\ge1,
		\end{cases}
	\end{equation}
	let $Q(t)$ denote the position of the second class particle. In Theorem~\ref{thm:second-class-step} we establish that
	\begin{equation}\label{eq:introduction-uniform-second-class-law}
		\frac{Q(t)}{t}\Longrightarrow V,
		\qquad
		\Pp(V=v_k)=\frac1n,
		\qquad k\in[n].
	\end{equation}
	This is a periodic analogue of the random selection of a characteristic by
	a second class particle in a rarefaction fan; cf.
	\cite{FerrariKipnis,FerrariGoncalvesMartin,AmirAngelValko,AggarwalCorwinGhosal}.
	
	A different law appears when the second class particle in the initial configuration is shifted by one
	position to the right, leaving a hole at the origin.  Namely, consider
	\begin{equation}\label{eq:introduction-shifted-second-class-step}
		\widetilde\eta_0(x)=
		\begin{cases}
			1,&x\le-1,\\
			+\infty,&x=0,\\
			2,&x=1,\\
			+\infty,&x\ge2,
		\end{cases}
	\end{equation}
	and let $\widetilde Q(t)$ be the position of its second class particle.
	Theorem~\ref{thm:shifted-second-class} establishes
	\begin{equation}\label{eq:introduction-shifted-speed-law}
		\frac{\widetilde Q(t)}{t}\Longrightarrow\widetilde V,
		\qquad
		\Pp(\widetilde V=v_k)
		=\frac qn+\frac{2(1-q)(n-k)}{n(n-1)},
		\qquad k\in[n].
	\end{equation}
	This is a periodic upgrade of \cite[Theorem 5.2 and Example 5.7]{BorodinBufetovColorPosition}.

	The proof of Theorem~\ref{thm:limit-shape-step} combines the stationary
	current identity for the multispecies ASEP on a ring, obtained from the
	adjacent-correlation formulas derived in \cite{PahujaCorrelations} via the construction of \cite{MartinASEP}, and various new ``soft" probabilistic considerations, which appear in $q>0$ case. The crucial technical statement is Proposition \ref{prop:eventual-separation}. 
	The second-class-particle limit behavior can be studied via the color-position
	symmetry \cite{BorodinBufetovColorPosition,BufetovHecke}, supplied by the affine Hecke algebra. 
	
	\subsection*{Acknowledgements}
	A.~Bufetov was partially supported by the European Research Council (ERC), Grant Agreement No. 101041499. N. Pahuja is grateful to MPI Leipzig for the hospitality during the visit to Leipzig, during which part of this work was done. A humanly generated complete draft of the text was lightly edited with the help of ChatGPT. 
	
	\section{Preliminaries}\label{sec:preliminaries}
	
	Throughout the paper, we fix an integer $n\ge 2$ and an asymmetry parameter
	$q\in[0,1)$.  We write $[n]=\{1,\ldots,n\}$, and all residue classes are
	understood modulo $n$.
	
	\subsection{Periodic ASEP}\label{subsec:periodic-asep}
	
	A one-species configuration is a function
	\[
	\chi:\Z\longrightarrow\{0,1\},
	\]
	where $\chi(x)=1$ means that $x$ is occupied by a particle and $\chi(x)=0$ means that $x$
	is empty.  We call $\chi$ \emph{$n$-admissible} if, for every $r\in\Z/n\Z$,
	the sequence
	\[
	k\longmapsto \chi(r+kn),\qquad k\in\Z,
	\]
	is nonincreasing.
	
	The periodic ASEP can be defined via a marked graphical
	construction.  For every $r\in\Z/n\Z$, let $\mathcal N_r$ be an independent
	rate-one Poisson process on $\mathbb{R}_{\ge 0}$, and additionally attach to every point of $\mathcal N_r$ an
	independent mark, uniformly distributed on $[0,1]$.  Almost surely, no two of
	the finitely many clocks ring at the same time.  At a point of
	$\mathcal N_r$, inspect simultaneously all pairs
	\[
	\bigl(\chi(r+kn),\chi(r+kn+1)\bigr),\qquad k\in\Z.
	\]
	In this sequence, all pairs with non-coinciding numbers are either all $(1,0)$ or all $(0,1)$. If all such pairs are $(1,0)$, then they are changed to $(0,1)$ with probability 1. If all such pairs are $(0,1)$, then all such pairs are changed to $(1,0)$ when the attached
	mark is at most $q$.  Equal pairs are always left unchanged. Thus a simultaneous right jump has rate $1$,
	while the reverse simultaneous jump has rate $q$.
	
	The step initial condition is
	\begin{equation}\label{eq:step-initial-condition}
		\chi_0(x)=\one_{\{x\le 0\}},\qquad x\in\Z.
	\end{equation}
	
	\begin{remark}\label{rem:ring-special-case}
		Note that one clock updates every edge in a fixed
		congruence class.  If one considers configurations that are constant on each
		arithmetic progression modulo $n$, the process reduces to the usual
		one-species ASEP on a ring with $n$ sites.
	\end{remark}
	
	\subsection{Affine permutations and the affine Hecke algebra}
	\label{subsec:affine-hecke}
	
	\begin{definition}\label{def:affine-permutation}
		An \emph{affine permutation of size $n$} is a bijection $w:\Z\to\Z$ such
		that
		\begin{align}
			w(i+n)&=w(i)+n, && \mbox{for all $i\in\Z$},\label{eq:affine-periodicity}\\
			\sum_{i=1}^n w(i)&=\binom{n+1}{2}.\label{eq:affine-normalization}
		\end{align}
		The group of affine permutations is denoted by $\wtS$.
	\end{definition}
	
	The values $w(1),\ldots,w(n)$ determine $w$ uniquely; we use the window
	notation
	\[
	w=[w(1),\ldots,w(n)].
	\]
	For $r\in\Z/n\Z$, the simple reflection $s_r$ is the affine permutation that
	simultaneously interchanges $r+kn$ and $r+kn+1$ for every $k\in\Z$.  Thus,
	for any integer representative of $r$,
	\[
	s_r(x)=
	\begin{cases}
		x+1,&x\equiv r\pmod n,\\
		x-1,&x\equiv r+1\pmod n,\\
		x,&\text{otherwise}.
	\end{cases}
	\]
	These reflections generate $\wtS$.  We denote the corresponding Coxeter length by $\ell$.
	We shall use the window formula for the length
	\begin{equation}\label{eq:affine-length-window}
		\ell(w)=
		\sum_{1\le i<j\le n}
		\abs{\floor{\frac{w(j)-w(i)}{n}}},
	\end{equation}
	and the right-descent criterion
	\begin{equation}\label{eq:right-descent-criterion}
		\ell(ws_r)=\ell(w)+1
		\quad\Longleftrightarrow\quad
		w(r)<w(r+1),
		\qquad r\in\Z/n\Z.
	\end{equation}
	The facts above are standard; see, for example,
	\cite{BjornerBrenti}.
	
	\begin{definition}\label{def:affine-hecke-algebra}
		The \emph{affine Hecke algebra} $\mathscr H_q(\wtS)$ is the real
		vector space with basis $\{T_w:w\in\wtS\}$ and multiplication determined by
		\begin{equation}\label{eq:hecke-multiplication}
			T_wT_{s_r}=
			\begin{cases}
				T_{ws_r},&\ell(ws_r)=\ell(w)+1,\\[1mm]
				(1-q)T_w+qT_{ws_r},&\ell(ws_r)=\ell(w)-1,
			\end{cases}
			\qquad w\in\wtS,\quad r\in\Z/n\Z.
		\end{equation}
	\end{definition}
	
	The linear map $\iota: \mathscr H_q(\wtS) \to \mathscr H_q(\wtS)$ defined via the images of the basis $\iota(T_w) = T_{w^{-1}}$, for any $w \in \wtS$, is well known to be an anti-involution:
	\begin{equation}\label{eq:hecke-anti-involution}
		\iota(h_1h_2)=\iota(h_2)\iota(h_1),
		\qquad h_1,h_2\in\mathscr H_q(\wtS).
	\end{equation}
	
	\subsection{The affine Hecke walk and its projections}
	\label{subsec:affine-hecke-walk}
	
	The \emph{affine Hecke walk} $(w_t)_{t\ge0}$ is the continuous-time Markov
	chain on $\wtS$ obtained from the same Poisson processes and marks as above.  At a point of
	$\mathcal N_r$, the state $w$ is changed to $ws_r$ if
	$\ell(ws_r)=\ell(w)+1$; if $\ell(ws_r)=\ell(w)-1$, the change is made exactly
	when the attached mark is at most $q$.  Its generator is therefore
	\begin{equation}\label{eq:affine-walk-generator}
		(\mathcal L f)(w)=\sum_{r\in\Z/n\Z}a_r(w)
		\bigl(f(ws_r)-f(w)\bigr),
		\qquad
		a_r(w)=
		\begin{cases}
			1,&\ell(ws_r)=\ell(w)+1,\\
			q,&\ell(ws_r)=\ell(w)-1.
		\end{cases}
	\end{equation}
	Thus the walk is encoded by right multiplication in
	\eqref{eq:hecke-multiplication}; cf. \cite{BufetovHecke}. 
	
	Our convention throughout the paper is that
	\begin{equation}\label{eq:position-to-label-convention}
		w(x)=\text{the label of the particle occupying position }x.
	\end{equation}
	Consequently, the particle with label $i$ is at position $w^{-1}(i)$.  Right
	multiplication by $s_r$ interchanges the labels at every pair of positions
	$r+kn,r+kn+1$, $k\in\Z$.  By \eqref{eq:right-descent-criterion}, if those
	labels are $a$ and $b$, respectively, then during the affine walk they are interchanged at rate $1$
	when $a<b$ and at rate $q$ when $a>b$.  The comparison is the same for all
	$k\in\Z$, because
	\[
	w(r+kn)=w(r)+kn,
	\qquad w(r+kn+1)=w(r+1)+kn.
	\]
	
	Let $\mathcal A$ be a finite totally ordered set and let
	$\varphi:\Z\to\mathcal A$ be nondecreasing.  Define
	\begin{equation}\label{eq:projection-general}
		\eta_w^{\varphi}(x)=\varphi\bigl(w(x)\bigr),
		\qquad x\in\Z.
	\end{equation}
	It is easy to see that this is a Markovian projection of the affine Hecke walk.  
	
	For the one-species projection, take the ordered alphabet
	$\{\mathsf p<\mathsf h\}$ and map labels $j\le0$ to a particle $\mathsf p$
	and labels $j\ge1$ to a hole $\mathsf h$.  After the occupation
	recoding $\mathsf p\mapsto1$, $\mathsf h\mapsto0$, this becomes
	\begin{equation}\label{eq:one-species-projection}
		\chi_w(x)=\one_{\{w(x)\le0\}}.
	\end{equation}
	Thus the order on species is preserved by $\varphi$, whereas the numerical
	occupation coding $1,0$ reverses that order.  If $w_0=\id$, the projection
	is the step configuration \eqref{eq:step-initial-condition}.
	
	Using ordered alphabets with more letters, we obtain via \eqref{eq:projection-general} finite-species periodic ASEP's. In this paper, we will need only two-species process with one second class particle, defined also in Section \ref{sec:Intro}. 
	
	Finally, the anti-involution $\iota$ immediately implies the following
	color-position symmetry.  This is the affine-periodic version of the
	symmetry for colored ASEP in \cite{BorodinBufetovColorPosition}, \cite{BufetovHecke}.
	
	\begin{lemma}[Color-position symmetry]\label{lem:color-position-symmetry}
		If the affine Hecke walk starts from the identity, then, for every $t\ge0$,
		\begin{equation}\label{eq:fixed-time-inversion-symmetry}
			w_t\ \stackrel{d}{=}\ w_t^{-1}.
		\end{equation}
	\end{lemma}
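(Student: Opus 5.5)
The approach is to express the law of $w_t$ as a coefficient of an element of $\mathscr H_q(\wtS)$ and then apply the anti-involution $\iota$.

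\textbf{Step 1: the law of $w_t$ as an algebra element.} Let
\[
\Pi_t=\sum_{w\in\wtS}\Pp(w_t=w)\,T_w .
\]
By \eqref{eq:hecke-multiplication} and \eqref{eq:affine-walk-generator}, the Kolmogorov forward equation for the walk started at $\id$ reads
\[
\frac{d}{dt}\Pi_t=\Pi_t\,\mathcal X,
\qquad
\mathcal X=\sum_{r\in\Z/n\Z}\bigl(T_{s_r}-T_{\id}\bigr).
\]
To check this, fix $w$ and $r$. If $\ell(ws_r)=\ell(w)+1$, then $T_w(T_{s_r}-1)=T_{ws_r}-T_w$, which is a jump at rate $1$. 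If $\ell(ws_r)=\ell(w)-1$, then $T_w(T_{s_r}-1)=q(T_{ws_r}-T_w)$, which is a jump at rate $q$. Summing over $w$ gives the forward equation. Since $\Pi_0=T_{\id}$, we get formally $\Pi_t=e^{t\mathcal X}$.

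\textbf{Step 2: the main obstacle, making this rigorous.} The algebra is infinite dimensional, so the exponential needs justification. We avoid analytic issues by a Poisson decomposition. Condition on the number $m$ of rings among all $n$ clocks in $[0,t]$; this number is Poisson with mean $nt$, and each ring independently picks $r$ uniformly. Each update is a move $w\mapsto ws_r$ with the prescribed probabilities, and it is encoded exactly by right multiplication by $T_{s_r}$, viewed as a stochastic operator on the finitely supported probability combinations of basis vectors. Hence
\[
\Pi_t=\sum_{m\ge0}e^{-nt}\frac{(nt)^m}{m!}\Bigl(\frac1n\sum_r T_{s_r}\Bigr)^{m}.
\]
Each coefficient of a fixed $T_w$ is a convergent series of nonnegative terms, so the identity holds coefficientwise. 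The same holds for $\Pi_t$ expressed through left multiplication.

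\textbf{Step 3: apply $\iota$.} The element $Y=\frac1n\sum_r T_{s_r}$ satisfies $\iota(Y)=Y$, because $s_r^{-1}=s_r$. By \eqref{eq:hecke-anti-involution}, $\iota(Y^m)=Y^m$ for every $m$. Since $\iota$ is linear on basis coefficients, applying it termwise to the series in Step 2 gives $\iota(\Pi_t)=\Pi_t$. Explicitly,
\[
\sum_w\Pp(w_t=w)\,T_{w^{-1}}=\sum_w\Pp(w_t=w)\,T_w .
\]
Comparing coefficients of $T_w$ gives $\Pp(w_t=w^{-1})=\Pp(w_t=w)$ for all $w$, which is \eqref{eq:fixed-time-inversion-symmetry}.
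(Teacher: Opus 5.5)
Your proof is correct and follows the paper's route: the paper also encodes the law of $w_t$ by the Hecke element $H_t=\exp\bigl(t\sum_{r}(T_{s_r}-T_{\id})\bigr)$ and treats $\iota(H_t)=H_t$ as an immediate consequence of the anti-involution. Your uniformization $\sum_{m\ge0}e^{-nt}\frac{(nt)^m}{m!}Y^m$, with $\iota$ applied coefficientwise, is a clean way to make rigorous the exponential in the infinite-dimensional algebra, which the paper leaves implicit.
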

	
	\subsection{Multispecies ASEP on a ring}\label{subsec:ring-asep}
	
	The state space of the ring process is $S_n$, viewed as the set of words
	$\omega=(\omega(1),\ldots,\omega(n))$ containing each label in $[n]$ once.
	At the clock of the oriented edge $(r,r+1)$, with $n+1$ identified with $1$,
	the adjacent entries are interchanged at rate
	\begin{equation}\label{eq:ring-rates}
		ab\longrightarrow ba
		\quad\text{at rate}\quad
		\begin{cases}
			1,&a<b,\\
			q,&a>b,
		\end{cases}
		\qquad a,b\in[n],\quad a\ne b.
	\end{equation}
	For every $q\in[0,1)$ the chain is irreducible on $S_n$. Therefore, it has a unique stationary law,
	denoted by $\pi_n^q$, which is rotation invariant. 
	
	For distinct $i,j\in[n]$, set
	\begin{equation}\label{eq:ring-correlations-definition}
		c_{i,j}^q(n)=\pi_n^q\bigl(\omega(1)=i,\ \omega(2)=j\bigr).
	\end{equation}
	We will need the following result
	
	\begin{theorem}\label{thm:ring-current-identity}
		For $1\le i<j\le n$,
		\begin{equation}\label{eq:ring-current-identity}
			c_{i,j}^q(n)-q\,c_{j,i}^q(n)
			=\frac{(1-q)(j-i)}{n\binom n2}.
		\end{equation}
	\end{theorem}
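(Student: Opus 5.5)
The plan is to reduce \eqref{eq:ring-current-identity} to explicit stationary two-point probabilities of the multispecies ring ASEP. These come from Martin's multiline-queue description of $\pi_n^q$ \cite{MartinASEP}; the adjacent correlations $c_{i,j}^q(n)$ were computed from it in \cite{PahujaCorrelations}. Alongside this I will use the one-species projections as an independent consistency check. Write
\[
d_{i,j}:=c_{i,j}^q(n)-q\,c_{j,i}^q(n),\qquad 1\le i<j\le n,
\]
which is the stationary net rate at which an $ij$ pair on the bond $(1,2)$ is converted into $ji$. It is the ``pair current'' through a fixed bond.

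\textbf{Projection check.} For a threshold $k\in\{1,\dots,n-1\}$, declare labels $\le k$ to be particles and labels $>k$ to be holes. By the rates \eqref{eq:ring-rates}, the image is a one-species ASEP on the ring with $k$ particles, and its stationary law is uniform on the $\binom nk$ configurations. Therefore the stationary particle current across the bond $(1,2)$ is
\[
J_k=(1-q)\,\Pp(\text{site }1\text{ occupied},\ \text{site }2\text{ empty})=(1-q)\frac{k(n-k)}{n(n-1)}.
\]
On the other hand, $J_k=\sum_{i\le k<j}d_{i,j}$. One checks that
\[
\sum_{i\le k<j}(j-i)=\frac{n\,k(n-k)}{2},
\]
so the right-hand side of \eqref{eq:ring-current-identity} reproduces every $J_k$ exactly. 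This confirms the normalization $\frac{1-q}{n\binom n2}$. However, it only determines the rectangle sums of $d_{i,j}$, which is $n-1$ equations for $\binom n2$ unknowns, so it cannot by itself fix the individual $d_{i,j}$.

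\textbf{Main step.} To identify each $d_{i,j}$, I will insert the explicit formulas for $c_{i,j}^q(n)$ and $c_{j,i}^q(n)$ from \cite{PahujaCorrelations}. These are written as rational functions of $q$, organized according to whether $i<j$ or $i>j$, and possibly by cases such as $j=i+1$ or position relative to the boundary labels. I then form $c_{i,j}-q\,c_{j,i}$. The expected mechanism is that the nonlinear $q$-dependence, coming from denominators like $1-q^{\,\cdot}$ or $q$-binomial-type factors, cancels between the two terms. What should remain is the linear expression $(1-q)(j-i)$ divided by $n\binom n2$. I will carry out this cancellation case by case, and use the threshold sums $J_k$ above as a check at each stage.

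\textbf{Main obstacle and fallback.} I expect the bookkeeping of this cancellation to be the main difficulty, since the correlation formulas are not individually linear in $j-i$. If a direct algebraic simplification is unwieldy, the fallback is to refine the projection argument. One would project to three-species systems (labels $<i$, the interval $[i,j]$ split further, labels $>j$) and use Martin's queues only to compute stationary currents of these coarser systems. The aim would be to show that $d_{i,j}-d_{i,j-1}$ and $d_{i,j}-d_{i+1,j}$ are independent of $i,j$; combined with the rectangle sums $J_k$, this would force the linear form.
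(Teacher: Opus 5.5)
Your main step is the paper's proof. It rewrites Pahuja's formulas through the label reversal $a\mapsto n+1-a$, which you should not overlook, since her convention has $ab\to ba$ at rate $1$ for $a>b$. It then pairs the $q$-dependent correction terms of $c_{i,j}^q(n)$ and $q\,c_{j,i}^q(n)$ via $([m]_q-1)+q^m=q[m]_q$, where $[m]_q=1+q+\cdots+q^{m-1}$, and finishes with an elementary polynomial identity, treating $j=i+1$ separately. Your threshold check $J_k$ is correct and, as you note, is only a consistency check.

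Do not rely on the fallback as stated. For $n\ge3$ the constant array $d_{i,j}\equiv\frac{1-q}{n(n-1)}$ has vanishing differences and reproduces every $J_k$. So differences independent of $i,j$ plus the rectangle sums leave a one-parameter family $d_{i,j}=\alpha+\gamma(j-i)$ with $\alpha+\gamma n/2=\frac{1-q}{n(n-1)}$. One further input, e.g.\ a single value $d_{i,i+1}$, would be needed.
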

	
	\begin{proof}
		Let $\widehat c_{a,b}^q(n)$ denote the correlations in the convention of
		\cite{PahujaCorrelations}, in which $ab\to ba$ has rate $1$ for $a>b$ and
		rate $q$ for $a<b$.  Reversing all labels, $a\mapsto n+1-a$, conjugates that
		process to \eqref{eq:ring-rates}; hence
		\begin{equation}\label{eq:correlation-label-reversal}
			c_{i,j}^q(n)=
			\widehat c_{n+1-i,n+1-j}^q(n).
		\end{equation}
		We record the algebraic consequence of
		\cite[Theorem~2.2]{PahujaCorrelations} that is needed below.  Fix $a>b$, put
		$d=a-b$ and $D=n^2(n-1)$, and write
		$[m]_q=1+q+\cdots+q^{m-1}$.  In the difference of formula~\cite[(2.1)]{PahujaCorrelations}, with
		indices $(a,b)$, and $q$ times formula~\cite[(2.2)]{PahujaCorrelations}, with indices $(b,a)$, the
		paired correction terms simplify by
		\begin{equation}\label{eq:q-integer-pairing}
			([m]_q-1)+q^m=q[m]_q.
		\end{equation}
		For $d>1$, multiplication by $D$ therefore gives
		\begin{align}\label{eq:current-simplification}
			D\bigl(\widehat c_{a,b}^q(n)-q\widehat c_{b,a}^q(n)\bigr)
			&=2d-q(n-1)+q(A-B-C),
		\end{align}
		where
		\begin{align*}
			A&=(d+1)\bigl(2b(n-a)+a+b-n-1\bigr),\\
			B&=(d+2)(b-1)(n-a),\\
			C&=bd(n-a+1).
		\end{align*}
		A direct expansion, using $a=b+d$, yields
		\begin{equation}\label{eq:pahuja-polynomial-cancellation}
			A-B-C=n-1-2d.
		\end{equation}
		Consequently, \eqref{eq:current-simplification} equals $2d(1-q)$.  When $d=1$, the
		last term in formula~(2.2) is zero, as specified there, and the corresponding
		last term in formula~(2.1) also vanishes because $[1]_q-1=0$.  In this case,
		\cite[Theorem~2.1]{PahujaCorrelations} and the same pairing give
		\begin{align*}
			D\bigl(\widehat c_{b+1,b}^q(n)-q\widehat c_{b,b+1}^q(n)\bigr)
			&=2-q\bigl((n-1)+b(n-b)\bigr)+q(A-B)\\
			&=2(1-q),
		\end{align*}
		where now $A-B=n-3+b(n-b)$.  Thus, in all cases,
		\begin{equation}\label{eq:pahuja-current-combination}
			\widehat c_{a,b}^q(n)-q\widehat c_{b,a}^q(n)
			=\frac{2(1-q)(a-b)}{n^2(n-1)}
			=\frac{(1-q)(a-b)}{n\binom n2}.
		\end{equation}
		Taking $a=n+1-i$ and $b=n+1-j$ in
		\eqref{eq:correlation-label-reversal}--
		\eqref{eq:pahuja-current-combination} proves
		\eqref{eq:ring-current-identity}.
	\end{proof}
	
	\section{Separation of leaders and the limit shape}\label{sec:limit-shape}
	
	In this section, we prove our main results. The main new technical obstacle for us was the proof of the fact that the affine random walk, after a finite random time, can be coupled with multi-species ASEP on a ring\footnote{geometrically, this can be interpreted as the fact that the affine random walk eventually stays in a Weyl chamber forever. In this paper, we chose to work in a permutation language, though all statements can also be formulated geometrically.}. For $q=0$ this is obvious, but in our case, such a stabilization is less obvious and required a detailed proof. We prove it in Proposition \ref{prop:eventual-separation} below, after some preparatory work.  
	
	\subsection{Tagged-particle displacements on the ring}
	\label{subsec:ring-displacements}
	
	Let $(\omega_t)_{t\ge0}$ be the multispecies ASEP on the ring.  We write
	$P_t(\omega,\cdot)$ for its transition kernel.
	
	\begin{lemma}\label{lem:ring-mixing}
		For every $\varepsilon>0$, there exists $T_\varepsilon<\infty$ such that
		\begin{equation}\label{eq:ring-mixing}
			\sup_{\omega\in S_n}
			\norm{P_t(\omega,\cdot)-\pi_n^q}_{\TV}<\varepsilon,
			\qquad t\ge T_\varepsilon.
		\end{equation}
	\end{lemma}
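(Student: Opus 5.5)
This is the standard convergence theorem for a finite, irreducible, continuous-time Markov chain, made uniform over the starting state. The plan is to obtain a uniform Doeblin minorization at a fixed time and then iterate it.

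\emph{Step 1: a positive transition matrix.} The state space $S_n$ is finite, and the chain $(\omega_t)$ is irreducible for every $q\in[0,1)$, as noted in Section~\ref{subsec:ring-asep}. Hence, for any $\omega,\omega'\in S_n$, there is a finite path of allowed transitions from $\omega$ to $\omega'$. Each transition in the path has rate $1$ or $q$, so the case $q=0$ is covered because only positive-rate moves are used.

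We claim that for every $t>0$,
\[
P_t(\omega,\omega')>0 .
\]
To see this, write $P_t=e^{t\mathcal L}$ with $\mathcal L=R-\Lambda$. Here $R\ge0$ is the matrix of jump rates and $\Lambda$ is the diagonal matrix of total exit rates. Since $\Lambda$ is diagonal, it commutes with $\Lambda_{\max}I$, and we may write
\[
e^{t\mathcal L}=e^{-t\Lambda_{\max}}\,e^{t(R+\Lambda_{\max}I-\Lambda)} .
\]
The matrix $R+\Lambda_{\max}I-\Lambda$ is entrywise nonnegative, and its off-diagonal support is that of $R$. Irreducibility then makes some power of it strictly positive in each entry, so the exponential series is strictly positive. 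Fix $t_0=1$. By finiteness of $S_n$,
\[
\delta:=\min_{\omega,\omega'}P_1(\omega,\omega')>0 .
\]

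\emph{Step 2: Doeblin contraction.} For any two starting states $\omega,\tilde\omega$, the measures $P_1(\omega,\cdot)$ and $P_1(\tilde\omega,\cdot)$ overlap in mass at least $|S_n|\delta=:\gamma>0$. Therefore
\[
\norm{P_1(\omega,\cdot)-P_1(\tilde\omega,\cdot)}_{\TV}\le 1-\gamma .
\]
Equivalently, the Dobrushin contraction coefficient of $P_1$ is at most $1-\gamma$. By submultiplicativity of the contraction coefficient and the semigroup property, for $t\ge m$ we get
\[
\sup_{\omega,\tilde\omega}\norm{P_t(\omega,\cdot)-P_t(\tilde\omega,\cdot)}_{\TV}\le(1-\gamma)^m .
\]

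\emph{Step 3: comparison with $\pi_n^q$.} Stationarity gives $\pi_n^q=\sum_{\tilde\omega}\pi_n^q(\tilde\omega)P_t(\tilde\omega,\cdot)$. Convexity of the total variation norm then yields
\[
\norm{P_t(\omega,\cdot)-\pi_n^q}_{\TV}\le\sup_{\tilde\omega}\norm{P_t(\omega,\cdot)-P_t(\tilde\omega,\cdot)}_{\TV}\le(1-\gamma)^{\lfloor t\rfloor}.
\]
Choosing $T_\varepsilon$ with $(1-\gamma)^{\lfloor T_\varepsilon\rfloor}<\varepsilon$ proves \eqref{eq:ring-mixing}.

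\emph{Where the care is needed.} There is no real obstacle. The only points needing attention are the strict positivity of $P_1$ in the degenerate case $q=0$, which the path argument in Step~1 handles using only rate-$1$ moves, and the uniformity in the starting state, which finiteness of $S_n$ supplies.
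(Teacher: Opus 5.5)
Your proof is correct and is essentially the paper's argument. The paper simply cites the standard convergence theorem for finite irreducible continuous-time chains (Levin--Peres, Ch.~4), and your three steps (Doeblin minorization of the skeleton $P_1$, Dobrushin contraction, and the convexity comparison with $\pi_n^q$) are the standard proof of that theorem, written out.

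One wording fix in Step~1: a single power of $R+\Lambda_{\max}I-\Lambda$ need not be entrywise positive. For $n=2$ every state has exit rate $1+q$, so this matrix is a multiple of the swap and is periodic. Positivity of the exponential series only requires each entry to be positive for some power, and irreducibility together with the $k=0$ term supplies that.
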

	
	\begin{proof}
		This is the standard convergence theorem for an irreducible continuous-time
		Markov chain on a finite state space.  Uniformity over the initial state is
		automatic because $S_n$ is finite; see, for example,
		\cite[Chapter~4]{LevinPeres}.
	\end{proof}
	
	For $i\in[n]$, let $X_i(t)$ be the lifted displacement of ring particle $i$:
	every clockwise jump adds $1$ and every counterclockwise jump subtracts $1$ from it.
	The initial lifted position is immaterial, and we set $X_i(0)=0$.  Let
	$g_i(\omega)$ be the total rate at which particle $i$ jumps clockwise minus
	the total rate at which it jumps counterclockwise in state $\omega$.  Then
	\begin{equation}\label{eq:ring-martingale}
		M_i(t)=X_i(t)-\int_0^t g_i(\omega_s)\,ds
	\end{equation}
	is a square-integrable martingale.  Since a tagged particle is affected only
	by its two adjacent clocks, its predictable quadratic variation is bounded
	by $2t$.
	
	\begin{lemma}\label{lem:ring-velocity}
		For every initial ring state and every $i\in[n]$,
		\begin{equation}\label{eq:ring-velocity-lln}
			\frac{X_i(t)}{t}\longrightarrow v_i
			\qquad\text{almost surely and in }L^1,
		\end{equation}
		where
		\begin{equation}\label{eq:ring-velocities}
			v_i=\frac{(1-q)(n+1-2i)}{n-1}.
		\end{equation}
		In particular,
		\begin{equation}\label{eq:strict-velocity-order}
			v_1>v_2>\cdots>v_n.
		\end{equation}
	\end{lemma}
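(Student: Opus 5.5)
The plan is to split $X_i(t)$ via the martingale decomposition \eqref{eq:ring-martingale}, treat the two pieces separately, and then identify the limit with Theorem~\ref{thm:ring-current-identity}.

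\textbf{Martingale part.} Since $\langle M_i\rangle_t\le 2t$, the jumps of $M_i$ are bounded by $1$ and its quadratic variation grows at most linearly. A standard strong law for such martingales then gives $M_i(t)/t\to0$ almost surely. Concretely, one can use Doob's $L^2$ inequality on dyadic blocks $[2^m,2^{m+1}]$ together with Borel--Cantelli, or simply note that $\sum_m \Ee\sup_{s\le 2^{m+1}}|M_i(s)|^2/\varepsilon^2 2^{2m}<\infty$. Moreover $\Ee|M_i(t)|\le\sqrt{2t}$, so the convergence also holds in $L^1$.

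\textbf{Additive functional.} Since $(\omega_t)$ is an irreducible chain on a finite state space, the ergodic theorem for continuous-time Markov chains gives
\[
\frac1t\int_0^t g_i(\omega_s)\,ds\longrightarrow \sum_{\omega}\pi_n^q(\omega)g_i(\omega)
\]
almost surely, from any initial state. Lemma~\ref{lem:ring-mixing} gives the same limit for expectations, and $g_i$ is bounded, so this convergence is also in $L^1$. Combining the two parts yields $X_i(t)/t\to \pi_n^q(g_i)$ almost surely and in $L^1$.

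\textbf{Identifying the limit.} It remains to show $\pi_n^q(g_i)=v_i$, which is where Theorem~\ref{thm:ring-current-identity} enters.

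Particle $i$ jumps clockwise across edge $(r,r+1)$ when it sits at $r$ next to some $j$. The rate is $1$ if $i<j$ and $q$ if $i>j$. It jumps counterclockwise across the same edge when the pattern is $j\,i$ at $(r,r+1)$, with rate $1$ if $j<i$ and $q$ if $j>i$.

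By rotation invariance of $\pi_n^q$, summing over the $n$ edges gives
\[
\pi_n^q(g_i)=n\sum_{j\ne i}\Bigl(\bigl[c^q_{i,j}-q\,c^q_{j,i}\bigr]\one_{j>i}-\bigl[c^q_{j,i}-q\,c^q_{i,j}\bigr]\one_{j<i}\Bigr).
\]
Theorem~\ref{thm:ring-current-identity} evaluates each bracket: for $j>i$ the first equals $\frac{(1-q)(j-i)}{n\binom n2}$, and for $j<i$ the second equals $\frac{(1-q)(i-j)}{n\binom n2}$. Hence
\[
\pi_n^q(g_i)=\frac{1-q}{\binom n2}\sum_{j\ne i}(j-i)=\frac{1-q}{\binom n2}\Bigl(\tbinom{n+1}{2}-ni\Bigr).
\]
Since $\binom{n+1}{2}-ni=\tfrac n2(n+1-2i)$, this simplifies to
\[
\frac{(1-q)(n+1-2i)}{n-1}=v_i.
\]
The strict ordering \eqref{eq:strict-velocity-order} is immediate because $q<1$.

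The only delicate point is bookkeeping. The orientation conventions must make the clockwise jump across $(r,r+1)$ correspond to the pattern $i\,j$ at positions $(r,r+1)$, matching the definition of $c^q_{i,j}$. With that fixed, the rest is routine.
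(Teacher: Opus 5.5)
Your proposal is correct and follows essentially the same route as the paper: the martingale decomposition with a strong law for $M_i$, the ergodic theorem for the bounded additive functional, and the identification of $\pi_n^q(g_i)$ via rotation invariance and Theorem~\ref{thm:ring-current-identity}, including the same orientation bookkeeping. The only differences are cosmetic: you spell out the dyadic Doob/Borel--Cantelli argument for $M_i(t)/t\to0$, and you evaluate $\sum_{j\ne i}(j-i)=\binom{n+1}{2}-ni$ in one step, whereas the paper writes it as $\binom{n+1-i}{2}-\binom{i}{2}$.
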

	
	\begin{proof}
		Since $\langle M_i\rangle_t\le2t$, the martingale strong law gives
		$M_i(t)/t\to0$ almost surely.  Moreover,
		$\Ee[M_i(t)^2]=\Ee[\langle M_i\rangle_t]\le2t$, so
		$M_i(t)/t\to0$ in $L^2$, and hence in $L^1$.  By the ergodic theorem for the
		finite irreducible chain,
		\[
		\frac1t\int_0^t g_i(\omega_s)\,ds
		\longrightarrow \pi_n^q(g_i)
		\]
		almost surely.  Since $g_i$ is bounded, the same convergence holds in
		$L^1$.  It remains to compute the stationary drift.
		
		At stationarity, the clockwise and counterclockwise jump rates of particle
		$i$ are, respectively,
		\begin{align*}
			r_i&=n\left(\sum_{\substack{j\in[n]\\j>i}}c_{i,j}^q(n)
			+q\sum_{\substack{j\in[n]\\j<i}}c_{i,j}^q(n)\right),\\
			\ell_i&=n\left(\sum_{\substack{j\in[n]\\j<i}}c_{j,i}^q(n)
			+q\sum_{\substack{j\in[n]\\j>i}}c_{j,i}^q(n)\right).
		\end{align*}
		Indeed, the events
		$\{\omega(r)=a,\omega(r+1)=b\}$, $r\in\Z/n\Z$, are disjoint when viewed as
		events specifying the position of particle $a$.  Rotation invariance therefore
		gives
		\[
		\Pp_{\pi_n^q}(\text{the clockwise neighbor of }a\text{ is }b)
		=\sum_{r\in\Z/n\Z}\Pp_{\pi_n^q}(\omega(r)=a,\omega(r+1)=b)
		=n c_{a,b}^q(n).
		\]
		These rate factors then follow directly from
		\eqref{eq:ring-rates}. One obtains,
		\begin{align*}
			\pi_n^q(g_i)
			&=n\sum_{\substack{j\in[n]\\j>i}}
			\bigl(c_{i,j}^q(n)-q c_{j,i}^q(n)\bigr)
			-n\sum_{\substack{j\in[n]\\j<i}}
			\bigl(c_{j,i}^q(n)-q c_{i,j}^q(n)\bigr)\\
			&=\frac{1-q}{\binom n2}
			\left(\sum_{j=i+1}^n(j-i)-\sum_{j=1}^{i-1}(i-j)\right)\\
			&=\frac{1-q}{\binom n2}
			\left(\binom{n+1-i}{2}-\binom{i}{2}\right)
			=\frac{(1-q)(n+1-2i)}{n-1},
		\end{align*}
		where the second equality is Theorem~\ref{thm:ring-current-identity}.
		This proves \eqref{eq:ring-velocity-lln}; the strict ordering follows from
		$v_i-v_{i+1}=2(1-q)/(n-1)>0$.
	\end{proof}
	
	\subsection{Leaders and the safe-region coupling}\label{subsec:leaders}
	
	Let $(w_t)_{t\ge0}$ be the affine Hecke walk.  For $i\in[n]$, the particle
	with label $i$ is called the $i$th \emph{leader}, and its position at time $t$ is
	\begin{equation}\label{eq:leader-position}
		Y_i(t)=w_t^{-1}(i),
		\qquad i\in[n].
	\end{equation}
	The $n$ leader positions determine the whole affine permutation, since
	\begin{equation}\label{eq:leader-class-periodicity}
		w_t^{-1}(i+kn)=Y_i(t)+kn,
		\qquad i\in[n],\quad k\in\Z.
	\end{equation}
	
	Define the \textit{safe} region
	\begin{equation}\label{eq:safe-region}
		\cD=\left\{w\in\wtS:
		\min_{\substack{i,j\in[n]\\i\ne j}}
		\abs{w^{-1}(i)-w^{-1}(j)}>n\right\}.
	\end{equation}
	Suppose $w\in\cD$, write $Y_i=w^{-1}(i)$, and order the leaders from right
	to left:
	\begin{equation}\label{eq:right-to-left-order}
		Y_{a_1}>Y_{a_2}>\cdots>Y_{a_n},
		\qquad (a_1,\ldots,a_n)\in S_n.
	\end{equation}
	Note that the residues of $Y_1,\ldots,Y_n$ are pairwise distinct modulo $n$.  Define a ring word
	$\Omega(w)\in S_n$ by
	\begin{equation}\label{eq:safe-ring-word}
		\Omega(w)(r)=k
		\quad\Longleftrightarrow\quad
		Y_{a_k}\equiv r\pmod n,
		\qquad r,k\in[n].
	\end{equation}
	Thus ring label $k$ records the $k$th leader in right-to-left order.
	
	\begin{lemma}[Safe-region coupling]\label{lem:safe-region-coupling}
		Start the affine walk from $w\in\cD$ and a ring ASEP from $\Omega(w)$, using
		the same marked clocks.  Let $X_k(t)$ be the lifted displacement of ring
		particle $k$, with $X_k(0)=0$.  Up to the first time at which the affine walk
		leaves $\cD$,
		\begin{equation}\label{eq:safe-coupling}
			Y_{a_k}(t)=Y_{a_k}(0)+X_k(t),
			\qquad k\in[n].
		\end{equation}
		Moreover, if the $n$ paths on the right-hand side of
		\eqref{eq:safe-coupling} remain pairwise at distance greater than $n$ for all
		$t\ge0$, then the affine walk never leaves $\cD$ and
		\eqref{eq:safe-coupling} holds for all $t\ge0$.
	\end{lemma}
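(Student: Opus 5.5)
The plan is a jump-by-jump comparison: show that while $w_t\in\cD$, every ring of every clock moves the leaders exactly as it moves the ring particles. Both processes are piecewise constant and driven by the same marked Poisson clocks, so it suffices to treat one ring. Suppose $w\in\cD$ with leaders ordered as in \eqref{eq:right-to-left-order}, ring word $\omega=\Omega(w)$, and relations $Y_{a_k}=Y_{a_k}(0)+X_k$ already established. Let the clock $\mathcal N_r$ ring with mark $m$.

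The first step is to check that the two processes see the same comparison. Right multiplication by $s_r$ exchanges the labels at positions $r+kn$ and $r+kn+1$ for all $k$, so it changes leader positions only if some leader occupies a position $\equiv r$ or $\equiv r+1 \pmod n$. Because the residues of the leaders are pairwise distinct, there is at most one leader in each of these residue classes. Call them $Y_{a_k}\equiv r$ and $Y_{a_l}\equiv r+1$. When both exist, the leader adjacent to $Y_{a_k}$ is the label $w(Y_{a_k}+1)$, which is the translate $a_l+mn$ of $a_l$ with $Y_{a_k}+1=Y_{a_l}+mn$.

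Here I use the safe-region condition. The $n$ leader positions have distinct residues and pairwise gaps $>n$. Consequently, in the right-to-left order, the ring word $\omega$ lists them cyclically by residue. The particle clockwise-adjacent to ring site $r$ is then the leader in residue class $r+1$. Whether the affine exchange is length increasing is decided by \eqref{eq:right-descent-criterion}, i.e. by whether $w(r)<w(r+1)$, which amounts to comparing $a_k$ with $a_l+mn$.

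The main obstacle is to show that this comparison agrees with the comparison of the ring labels $k$ and $l$. Ring labels order the leaders by position: $k<l$ iff $Y_{a_k}>Y_{a_l}$. So I must show that $w(x)<w(x+1)$ iff the shifted leader sitting at $x+1$ comes from a leader further left.

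I would first prove an auxiliary monotonicity fact: for $w\in\cD$, positions and labels of leader translates are ordered compatibly. For instance, $w^{-1}(i+mn)=Y_i+mn$, so label $a_l+mn$ sits at $Y_{a_l}+mn=Y_{a_k}+1$. Comparing $a_k$ with $a_l+mn$ requires knowing the sign of $m$. Since $|Y_{a_k}-Y_{a_l}|>n$, $m\ne0$ and the sign of $m$ equals the sign of $Y_{a_k}-Y_{a_l}$, roughly $m\approx (Y_{a_k}-Y_{a_l})/n$. As $|a_k-a_l|<n$, this gives $a_l+mn>a_k$ iff $m\ge1$ iff $Y_{a_k}>Y_{a_l}$ iff $k<l$. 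I expect the fiddly part to be the bookkeeping of exactly which neighbor the ring word places at site $r+1$ and the wrap-around edge $(n,1)$. For this I would recheck \eqref{eq:safe-ring-word} directly: ring site $r$ holds $k$ iff $Y_{a_k}\equiv r$, so sites $r$ and $r+1$ hold $k$ and $l$, matching the affine pair.

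Granting the comparison, both processes perform the exchange under the same condition: always if the pair is increasing, and iff $m\le q$ otherwise. The affine exchange moves $Y_{a_k}$ by $+1$ and $Y_{a_l}$ by $-1$, while the ring moves particle $k$ clockwise and $l$ counterclockwise, so $X_k$ and $X_l$ change by $+1$ and $-1$. If only one of the two residue classes contains a leader (impossible, since the $n$ residues are all distinct and all appear), nothing more needs checking, and the other leaders are unaffected on both sides. Induction over the finitely many rings before the exit time from $\cD$ yields \eqref{eq:safe-coupling}.

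For the second claim, argue by contradiction. Let $\tau$ be the first exit time. Up to $\tau$ the identity holds. At the jump time $\tau$ the same update is applied, because the update rule was checked from the pre-jump state in $\cD$, so the identity also holds at $\tau$. Hence the leader gaps at $\tau$ equal the gaps of the right-hand-side paths, which are $>n$ by assumption. Thus $w_\tau\in\cD$, contradicting the definition of $\tau$. So $\tau=\infty$.
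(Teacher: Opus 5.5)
Your proof is correct and is essentially the paper's argument. The paper encodes the same adjacent-label comparison through the offsets $b_i=Y_i-i$: a class-$i$ representative at $x$ carries label $x-b_i$, and $Y_i>Y_j+n$ forces $b_i-b_j>1$. That is equivalent to your sign analysis of the translate index $m$ defined by $Y_{a_k}+1=Y_{a_l}+mn$. Your first-exit-time argument for the second assertion fills in a detail the paper leaves unstated. Both versions also use, without saying so, that the right-to-left order stays fixed inside $\cD$; this holds because one jump changes any leader gap by at most $2<n+1$.
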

	
	\begin{proof}
		Set $b_i:=Y_i-i$ for $i\in[n]$.  Every representative of leader class $i$
		has the form $x=Y_i+mn$ and carries the integer label $i+mn=x-b_i$.
		If $Y_i>Y_j+n$, then
		\begin{equation}\label{eq:b-order}
			b_i-b_j=Y_i-Y_j-(i-j)>1,
		\end{equation}
		because $i-j\le n-1$.  Hence, if adjacent sites $x,x+1$ carry
		representatives of classes $i$ and $j$, respectively, then
		\begin{equation}\label{eq:adjacent-label-comparison}
			(x-b_i)-(x+1-b_j)=-1-(b_i-b_j)<0.
		\end{equation}
		Thus, whenever leader $i$ is farther to the right than leader $j$, a
		representative of class $i$ immediately to the left of a representative of
		class $j$ has the smaller integer label.  Reversing the two classes reverses
		the comparison.  Therefore, as long as the leaders remain in $\cD$, the
		exchange rate of two adjacent leader classes is $1$ when their right-to-left
		ranks are increasing and $q$ when those ranks are decreasing, exactly as in
		\eqref{eq:ring-rates}.
		Thus, the two systems make identical accepted or rejected moves under the common mark, which concludes the proof. 
	\end{proof}
	
	The next proposition is the key to our proof of Proposition~\ref{prop:eventual-separation}: it shows that if leaders are far apart at some point, then with positive probability, they will be far apart forever. 
	
	\begin{proposition}\label{prop:leaders-separate}
		For every $\varepsilon>0$, there exists $M=M(\varepsilon,n,q)$ such that,
		from every initial state satisfying
		\begin{equation}\label{eq:M-separated-initial}
			\min_{\substack{i,j\in[n]\\i\ne j}}
			\abs{Y_i(0)-Y_j(0)}\ge M,
		\end{equation}
		one has
		\begin{equation}\label{eq:survival-safe-probability}
			\Pp\bigl(w_t\in\cD\text{ for every }t\ge0\bigr)
			\ge1-\varepsilon.
		\end{equation}
	\end{proposition}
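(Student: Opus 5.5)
The plan is to reduce the statement, via Lemma~\ref{lem:safe-region-coupling}, to a statement about the multispecies ring ASEP alone, and then use Lemma~\ref{lem:ring-velocity} together with the finiteness of $S_n$ to get uniformity.

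\textbf{Reduction to the ring.} Take $M>n$ and an initial state satisfying \eqref{eq:M-separated-initial}; in particular $w_0\in\cD$. Order the leaders from right to left as in \eqref{eq:right-to-left-order}. Run the ring ASEP from $\Omega(w_0)$ with the same marked clocks, and let $X_k(t)$ be the lifted displacements. Define
\[
Z_k(t)=Y_{a_k}(0)+X_k(t),\qquad k\in[n].
\]
By the second part of Lemma~\ref{lem:safe-region-coupling}, it suffices to show that with probability at least $1-\varepsilon$ these $n$ paths stay pairwise at distance greater than $n$ for all $t\ge0$.

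It is enough to control consecutive gaps. Set
\[
G_k(t)=Z_k(t)-Z_{k+1}(t),\qquad 1\le k\le n-1.
\]
If $G_k(t)>n$ for all $k$ and all $t$, then the right-to-left order of the paths is preserved. Hence every nonconsecutive gap is a sum of consecutive gaps, and it also exceeds $n$. Since $G_k(0)\ge M$, it therefore suffices to show that
\begin{equation*}
\Pp\Bigl(\inf_{t\ge0}\bigl(X_k(t)-X_{k+1}(t)\bigr)\le n-M\Bigr)\le\frac{\varepsilon}{n-1}
\end{equation*}
for each $k$, uniformly in the initial ring state. A union bound over $k$ then finishes the proof. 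The law of $(X_k)_k$ depends only on the initial ring word $\Omega(w_0)\in S_n$, and not on the actual leader positions.

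\textbf{Tightness of the infimum.} Fix an initial ring word $\omega$ and an index $k$. By Lemma~\ref{lem:ring-velocity}, almost surely
\[
\frac{X_k(t)-X_{k+1}(t)}{t}\longrightarrow v_k-v_{k+1}=\frac{2(1-q)}{n-1}>0,
\]
so $X_k(t)-X_{k+1}(t)\to+\infty$. Consequently there is a random $T$ after which the difference is positive. On $[0,T]$ the process is piecewise constant and has only finitely many jumps almost surely, since the clocks form Poisson processes of total rate $n$. Therefore
\[
I_{k,\omega}:=\inf_{t\ge0}\bigl(X_k(t)-X_{k+1}(t)\bigr)
\]
is an almost surely finite random variable. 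Hence $\Pp(I_{k,\omega}\le -m)\to0$ as $m\to\infty$.

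Since there are only $n!\,(n-1)$ pairs $(\omega,k)$, we can choose a single $m$ such that $\Pp(I_{k,\omega}\le -m)\le\varepsilon/(n-1)$ for all of them. Setting $M=n+m$ (and $M>n$) gives the required bound.

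\textbf{Main obstacle.} The main subtlety is the uniformity over initial states. It is resolved because the coupled ring process starts from one of finitely many words, and the continuous leader positions enter only through the additive constants $Y_{a_k}(0)$. A second point to check is that the velocity ordering goes the right way: ring label $k$ is the $k$-th leader from the right, and $v_1>\dots>v_n$, so the rightmost leader is the fastest and all gaps drift apart. If the labelling had been reversed, the argument would fail, so this convention in \eqref{eq:safe-ring-word} should be verified carefully against Lemma~\ref{lem:safe-region-coupling}.
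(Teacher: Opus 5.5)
Your proposal is correct and follows essentially the same route as the paper: you couple with the ring ASEP started from $\Omega(w_0)$ via Lemma~\ref{lem:safe-region-coupling}, use Lemma~\ref{lem:ring-velocity} to show that the worst backtracking of $X_\ell-X_k$ ($k<\ell$) is almost surely finite, and use the finiteness of $S_n$ to choose a uniform quantile. The only cosmetic difference is that you control consecutive gaps and take a union bound, while the paper puts all pairs $k<\ell$ into a single variable $R_\omega=\max_{k<\ell}\sup_{t\ge0}(X_\ell(t)-X_k(t))$ and sets $M=n+1+R$.
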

	
	\begin{proof}
		Order the leaders in the initial state according to \eqref{eq:right-to-left-order}, and consider
		the coupled ring process started from $\Omega(w_0)$.  For a ring initial
		state $\omega\in S_n$, define
		\begin{equation}\label{eq:ring-backtracking-variable}
			R_\omega=
			\max_{1\le k<\ell\le n}
			\sup_{t\ge0}\bigl(X_\ell(t)-X_k(t)\bigr).
		\end{equation}
		By Lemma~\ref{lem:ring-velocity}, for $k<\ell$,
		\[
		\frac{X_\ell(t)-X_k(t)}{t}\longrightarrow v_\ell-v_k<0.
		\]
		The trajectories have finitely many jumps on compact time intervals.  Since
		the displayed ratio has a strictly negative limit, the difference is
		negative and of linear order for all sufficiently large times; its supremum
		is therefore attained on a finite time interval.  Hence
		$R_\omega<\infty$ almost surely.  Since $S_n$ is finite, there exists a
		deterministic integer $R<\infty$ such that
		\begin{equation}\label{eq:uniform-backtracking-quantile}
			\inf_{\omega\in S_n}\Pp_\omega(R_\omega\le R)
			\ge1-\varepsilon.
		\end{equation}
		Choose $M=n+1+R$ and set $E=\{R_{\Omega(w_0)}\le R\}$. By
		\eqref{eq:uniform-backtracking-quantile}, $\Pp(E)\ge1-\varepsilon$.
		On $E$, for every $k<\ell$ and every $t\ge0$,
		\[
		\bigl(Y_{a_k}(0)+X_k(t)\bigr)
		-\bigl(Y_{a_\ell}(0)+X_\ell(t)\bigr)
		\ge M-R=n+1>n.
		\]
		Therefore Lemma~\ref{lem:safe-region-coupling} implies that the affine walk
		remains in $\cD$ forever on $E$. This proves
		\eqref{eq:survival-safe-probability}. The bound is uniform in the initial
		affine state because \eqref{eq:uniform-backtracking-quantile} is uniform
		over all possible initial ring words.
	\end{proof}
	
	\subsection{Uniform reachability of separated configurations}
	\label{subsec:uniform-reachability}
	
	For $M\ge1$, set
	\begin{equation}\label{eq:SM-definition}
		\cS_M=\left\{w\in\wtS:
		\min_{\substack{i,j\in[n]\\i\ne j}}
		\abs{w^{-1}(i)-w^{-1}(j)}\ge M\right\}.
	\end{equation}
	
	The following claim is very natural and is close to being standard. However, we have not found an exact reference, and therefore provide a full proof. 
	
	\begin{proposition}\label{prop:reach-separated}
		For every $M\ge1$, set
		\begin{equation}\label{eq:CM-explicit}
			C_M=\ceil{\frac{M}{2n}}\frac{n(n^2-1)}{3}.
		\end{equation}
		For every $w\in\wtS$, there exist residues
		$r_1,\ldots,r_{C_M}\in\Z/n\Z$ such that
		\begin{equation}\label{eq:increasing-path}
			\ell(ws_{r_1}\cdots s_{r_k})=\ell(w)+k,
			\qquad 1\le k\le C_M,
		\end{equation}
		and
		\begin{equation}\label{eq:path-reaches-SM}
			ws_{r_1}\cdots s_{r_{C_M}}\in\cS_M.
		\end{equation}
		Therefore, for the affine Hecke walk started from $w$, one has
		\begin{equation}\label{eq:uniform-reach-probability}
			\inf_{w\in\wtS}\Pp_w(w_{C_M}\in\cS_M)
			\ge e^{-nC_M}>0.
		\end{equation}
	\end{proposition}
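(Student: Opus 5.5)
The plan is to build the increasing path from a \emph{translation} element that pushes the leaders apart in the direction they are already ordered, with the length bookkeeping done by \eqref{eq:affine-length-window} applied to $w^{-1}$. Since $\ell(w)=\ell(w^{-1})$ and $w^{-1}$ has window $[Y_1,\dots,Y_n]$ with $Y_i=w^{-1}(i)$, we have
\[
\ell(w)=\sum_{1\le i<j\le n}\abs{\floor{\frac{Y_j-Y_i}{n}}}.
\]

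\textbf{Step 1: the translation.} Fix $w$ and order its leaders as in \eqref{eq:right-to-left-order}, $Y_{a_1}>\dots>Y_{a_n}$; the positions are distinct, so this always makes sense. Put $\mu_{a_k}=n+1-2k$, so $\sum_i\mu_i=0$, and define the affine permutation $\tau$ by
\[
\tau^{-1}(x)=x+n\mu_{c(x)},
\]
where $c(x)\in[n]$ is the leader class of the site $x$, i.e.\ $x\equiv Y_{c(x)}\pmod n$. This $\tau$ is periodic and, because $\sum\mu_i=0$, satisfies \eqref{eq:affine-normalization}; it is a pure translation. For $w'=w\tau$ we get $w'^{-1}(i)=\tau^{-1}(Y_i)=Y_i+n\mu_i$. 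Each leader keeps its residue, the right-to-left order is preserved, and every gap $Y_{a_k}-Y_{a_{k+1}}$ grows by exactly $2n$. The new state has the same leader order, so the same $\mu$ can be reused. Iterating $m=\ceil{M/(2n)}$ times gives pairwise gaps at least $1+2nm>M$, which is \eqref{eq:path-reaches-SM}.

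\textbf{Step 2: additivity of length.} In the displayed formula, if $Y_j>Y_i$ then $\floor{(Y_j-Y_i)/n}\ge0$ and it increases by $\mu_j-\mu_i>0$. If $Y_j<Y_i$ then the floor is $\le-1$ and it changes by $\mu_j-\mu_i<0$. Hence each absolute value grows by exactly $\abs{\mu_i-\mu_j}$, and
\[
\ell(w\tau)=\ell(w)+\sum_{i<j}\abs{\mu_i-\mu_j}=\ell(w)+2\binom{n+1}{3}=\ell(w)+\frac{n(n^2-1)}{3}.
\]
Applied to $\id$, the same computation gives $\ell(\tau)=n(n^2-1)/3$. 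So $\ell(w\tau)=\ell(w)+\ell(\tau)$.

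\textbf{Step 3: a length-increasing word.} Take any reduced word $\tau=s_{r_1}\cdots s_{r_L}$ with $L=\ell(\tau)$. Each multiplication by a simple reflection changes length by $\pm1$. Reaching $\ell(w)+L$ after $L$ steps therefore forces every step to increase length, which gives \eqref{eq:increasing-path}. Concatenating the $m$ blocks (each block being length-additive on the current state, by Step 2) produces $mL=C_M$ residues with all prefixes increasing. The only delicate point is the sign analysis of the floor in Step 2, including the boundary cases $\floor{\cdot}=0$ and $\floor{\cdot}=-1$. This is where the choice of $\mu$ ordered like the $Y$'s is essential, and I expect it to be the main thing to check carefully.

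\textbf{Step 4: the probability bound.} Split $[0,C_M]$ into unit intervals. On the $k$th interval, require that clock $\mathcal N_{r_k}$ rings exactly once and no other clock rings; this has probability $e^{-1}\cdot e^{-(n-1)}=e^{-n}$, independently over intervals. Every such move is length-increasing, so it is accepted with probability one regardless of the mark. On this event $w_{C_M}=ws_{r_1}\cdots s_{r_{C_M}}\in\cS_M$, which gives \eqref{eq:uniform-reach-probability} uniformly in $w$.
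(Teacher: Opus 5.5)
Your proposal is correct and is essentially the paper's proof. Your $\tau^{-1}$ is exactly the paper's translation $\theta_d$ with $K=1$ (your $\mu_i$ is its $c_{\rho(i)}$), and iterating it $m=\ceil{M/(2n)}$ times with concatenated reduced words is the same as the paper's single translation with $d$ scaled by $K$; the sign-matching length count and the $e^{-n}$-per-interval clock event are also the same. One small imprecision: $\ell(\tau)=n(n^2-1)/3$ is not literally ``the same computation'' at $\id$, since the leader of $\id$ at site $i$ moves by $n\mu_{c(i)}$ rather than $n\mu_i$, but it follows directly from \eqref{eq:affine-length-window}: all floors $\floor{(j-i)/n}$ in the identity window vanish, and $c$ permutes $[n]$.
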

	
	\begin{proof}
		Put $u=w^{-1}=[y_1,\ldots,y_n]$ (recall the window notation from Section~\ref{subsec:affine-hecke}). Note $y_i=Y_i$ in the notation of Section~\ref{subsec:leaders}. Let $\rho(i)$ be the rank of $y_i$
		among the window entries, with rank $1$ assigned to the smallest entry.  Write
		\[
		K=\ceil{\frac{M}{2n}},
		\qquad c_r=2r-n-1\quad(r\in[n]),
		\]
		and, with residue zero represented by $n$, define
		\[
		d_{\modn{y_i}}=Kc_{\rho(i)},\qquad i\in[n].
		\]
		The residues of the $y_i$'s are pairwise distinct, and $\sum_{r=1}^n c_r=0$.
		Thus $d=(d_1,\ldots,d_n)\in\Z^n$ is well defined, $\sum_r d_r=0$, and
		\[
		\theta_d(r+kn)=r+n(k+d_r),
		\qquad r\in[n],\quad k\in\Z,
		\]
		defines an affine permutation.  If $y_i<y_j$, then
		\[
		(\theta_du)(j)-(\theta_du)(i)
		=y_j-y_i+2nK\bigl(\rho(j)-\rho(i)\bigr)
		\ge 1+2nK\ge M.
		\]
		Hence the window entries of $\theta_du$ are pairwise $M$-separated.
		
		It remains to check that $\theta_du$ lies above $u$ in left weak order.  For
		$1\le i<j\le n$, set
		\[
		A_{ij}=\floor{\frac{y_j-y_i}{n}},
		\qquad
		D_{ij}=d_{\modn{y_j}}-d_{\modn{y_i}}.
		\]
		By construction, $D_{ij}$ has the same sign as $y_j-y_i$; consequently,
		$A_{ij}$ and $D_{ij}$ have the same weak sign, and
		$|A_{ij}+D_{ij}|-|A_{ij}|=|D_{ij}|$.  Applying the window formula
		\eqref{eq:affine-length-window} to $u$, $\theta_du$, and $\theta_d$, and
		using that the residues of the $y_i$'s form a permutation of $[n]$, gives
		\begin{align*}
			\ell(\theta_du)-\ell(u)
			&=\sum_{1\le i<j\le n}|D_{ij}|\\
			&=\sum_{1\le r<s\le n}|d_s-d_r|
			=\ell(\theta_d).
		\end{align*}
		Moreover, since the multiset of the $d_r$'s is
		$\{Kc_1,\ldots,Kc_n\}$,
		\[
		\ell(\theta_d)
		=2K\sum_{1\le r<s\le n}(s-r)
		=K\frac{n(n^2-1)}{3}=C_M.
		\]
		
		Let $\tau=\theta_d^{-1}$ and choose a reduced expression
		$\tau=s_{r_1}\cdots s_{r_{C_M}}$.  Taking inverses in
		$\ell(\theta_du)=\ell(\theta_d)+\ell(u)$ yields
		$\ell(w\tau)=\ell(w)+\ell(\tau)$.  Hence every prefix of the chosen reduced
		word is length-additive, proving \eqref{eq:increasing-path}; and
		$(w\tau)^{-1}=\theta_du$, so the separation proved above gives
		\eqref{eq:path-reaches-SM}.
		
		Finally, on each interval $(k-1,k]$, require the clock $r_k$ to ring exactly
		once and all other clocks not to ring.  This event has probability $e^{-n}$
		per interval, hence $e^{-nC_M}$ altogether.  Every prescribed move is
		length-increasing and is therefore accepted independently of its mark.  This
		proves \eqref{eq:uniform-reach-probability}.
	\end{proof}
	
	\begin{proposition}\label{prop:eventual-separation}
		From every initial affine permutation, and in particular from the identity,
		there exists almost surely a finite random time $T_\mathrm{sep}$ such that
		\begin{equation}\label{eq:eventual-safe-region}
			w_t\in\cD,
			\qquad \mbox{for all $\ t \ge T_\mathrm{sep}$ }.
		\end{equation}
	\end{proposition}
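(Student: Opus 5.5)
The plan is a standard renewal (geometric-trials) argument that combines Proposition~\ref{prop:reach-separated} with Proposition~\ref{prop:leaders-separate}, using the strong Markov property of the affine Hecke walk. Fix $\varepsilon=1/2$ and let $M=M(1/2,n,q)$ be given by Proposition~\ref{prop:leaders-separate}; we may assume $M>n$, so that $\cS_M\subset\cD$. Put $C=C_M$ and $p=e^{-nC}$ as in Proposition~\ref{prop:reach-separated}. We then build a sequence of attempts and show that some attempt succeeds in the sense that the walk enters $\cS_M$ and afterwards never leaves $\cD$.

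We define stopping times recursively. Set $\sigma_0=0$. Given $\sigma_m<\infty$, run the walk for $C$ units of time. If $w_{\sigma_m+C}\notin\cS_M$, put $\sigma_{m+1}=\sigma_m+C$ (a failed reach). If $w_{\sigma_m+C}\in\cS_M$, let $\sigma_{m+1}$ be the first time after $\sigma_m+C$ at which the walk leaves $\cD$; this is $+\infty$ if no such time exists. Each $\sigma_m$ is a stopping time for the filtration generated by the marked Poisson clocks. These clocks have independent increments, so the walk after $\sigma_m$ is again an affine Hecke walk started from $w_{\sigma_m}$.

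By \eqref{eq:uniform-reach-probability}, conditionally on $\mathcal F_{\sigma_m}$ with $\sigma_m<\infty$, the walk reaches $\cS_M$ at time $\sigma_m+C$ with probability at least $p$, whatever the state $w_{\sigma_m}$ is. Given that it does, Proposition~\ref{prop:leaders-separate} (applied via the strong Markov property at $\sigma_m+C$, its bound being uniform in the initial state) shows that the walk stays in $\cD$ forever with conditional probability at least $1/2$, that is, $\sigma_{m+1}=\infty$. Hence
\[
\Pp(\sigma_{m+1}<\infty\mid \mathcal F_{\sigma_m})\le 1-\tfrac p2
\qquad\text{on }\{\sigma_m<\infty\}.
\]
Iterating gives $\Pp(\sigma_m<\infty)\le(1-p/2)^m\to0$. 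So almost surely there is a last finite $\sigma_m$, and the attempt starting there succeeds. Then $T_{\mathrm{sep}}:=\sigma_m+C$ works, because at that time the walk is in $\cS_M\subset\cD$ and it never exits $\cD$ afterwards.

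The main point to check is that the uniformity over initial states in both propositions is what makes the geometric bound legitimate; the infinite state space $\wtS$ would otherwise be an obstacle, and this is the step I expect to need the most care. A second detail is the strong Markov property at random times for this countable-state chain. The walk has bounded total jump rate $\le n$, so it is non-explosive and the graphical construction makes the strong Markov property routine. One also has to handle the degenerate case where $\sigma_m$ falls inside an excursion. This cannot happen: the exit time from $\cD$ is a stopping time, and after it the process simply restarts a new attempt.
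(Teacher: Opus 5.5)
Your argument is correct, and its core is the same as the paper's. Both combine the uniform reachability bound $\inf_w\Pp_w(w_{C_M}\in\cS_M)\ge e^{-nC_M}$ with the uniform survival bound of Proposition~\ref{prop:leaders-separate}. Both thereby get a success probability of at least $e^{-nC_M}/2$ from every starting state.

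The difference lies only in how you upgrade this uniform positive bound to probability one.

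\begin{itemize}
\item You run explicit trials indexed by stopping times $\sigma_m$. You use the strong Markov property at $\sigma_m$ and $\sigma_m+C$ to get geometric decay of $\Pp(\sigma_m<\infty)$.
\item The paper uses no random times. It observes that $A=\{w_t\in\cD\text{ for all large }t\}$ is unchanged by deleting a finite initial segment of the path. So the simple Markov property at deterministic integer times gives $\Pp(A\mid\cF_m)=h(w_m)\ge e^{-nC_M}/2$. L\'evy's upward theorem then forces $\one_A\ge e^{-nC_M}/2$ almost surely, hence $\Pp(A)=1$.
\end{itemize}

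The paper's route is shorter. It also makes the bookkeeping in your last paragraph unnecessary: the exit times from $\cD$ and the strong Markov property at them never come up. Your route is more constructive, since it gives a geometric tail on the number of attempts. It does not give such a tail for $T_{\mathrm{sep}}$ itself, because an attempt that enters $\cS_M$ and later leaves $\cD$ can last arbitrarily long.
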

	
	\begin{proof}
		Let
		\[
		A=\left\{\text{there exists }T<\infty\text{ such that }
		w_t\in\cD\text{ for all }t\ge T\right\}.
		\]
		Apply Proposition~\ref{prop:leaders-separate} with $\varepsilon=1/2$, and
		let $M$ be the resulting separation scale.  Proposition
		\ref{prop:reach-separated} gives constants $C_M<\infty$ and
		$\delta=e^{-nC_M}>0$ such that
		\[
		\Pp_w(w_{C_M}\in\cS_M)\ge\delta
		\qquad\text{for every }w\in\wtS.
		\]
		By the Markov property at time $C_M$ and Proposition
		\ref{prop:leaders-separate},
		\begin{equation}\label{eq:uniform-eventual-success}
			h(w):=\Pp_w(A)\ge\frac{\delta}{2}=:p>0,
			\qquad w\in\wtS.
		\end{equation}
		
		Let $(\cF_t)$ be the natural filtration of the affine Hecke walk.  Removing a finite initial segment
		of a path does not change the event $A$.  Therefore the time-homogeneous
		Markov property gives, for every integer $m\ge0$,
		\begin{equation}\label{eq:conditional-tail-probability}
			\Pp(A\mid\cF_m)=h(w_m)\ge p
			\qquad\text{almost surely}.
		\end{equation}
		The chain is nonexplosive because its total clock rate is $n$, and hence
		$A\in\cF_\infty:=\sigma(\bigcup_{m\ge0}\cF_m)$.  By L\'evy's upward theorem,
		the left-hand side of \eqref{eq:conditional-tail-probability} converges
		almost surely to $\one_A$.  Passing to the limit in the lower bound gives
		$\one_A\ge p$ almost surely.  Since $p>0$, the event $A^c$ has probability
		zero, which is precisely the assertion that a finite
		$T_\mathrm{sep}$ satisfying \eqref{eq:eventual-safe-region} exists.
	\end{proof}
	
	\subsection{Limit shape and local equilibrium}\label{subsec:limit-shape}
	
	For $K\in\{0,1,\ldots,n\}$, let $\nu_{n,K}$ be the probability law on
	$\{0,1\}^{\Z}$ obtained by choosing a uniformly random $K$-element subset
	$B\subset\Z/n\Z$ and setting
	\begin{equation}\label{eq:nu-nK}
		\eta(x)=\one_{\{\modn{x}\in B\}},
		\qquad x\in\Z.
	\end{equation}
	Thus $\nu_{n,K}$ is the uniform stationary law of the $K$-particle ASEP on
	the $n$-site ring, lifted periodically to $\Z$.
	
	\begin{theorem}[Limit shape and local equilibrium for step initial data]
		\label{thm:limit-shape-step}
		Let the affine Hecke walk start from $w_0=\id$, and let
		\begin{equation}\label{eq:chi-from-wt}
			\chi_t(x)=\one_{\{w_t(x)\le0\}}
		\end{equation}
		be its one-species projection.  Recall the leader velocities
		\begin{equation}\label{eq:theorem-velocities}
			v_k=\frac{(1-q)(n+1-2k)}{n-1},
			\qquad k\in[n].
		\end{equation}
		Then the following assertions hold.
		
		\smallskip
		\noindent\emph{(i) Leader law of large numbers.}
		Almost surely, there is a random permutation $(a_1,\ldots,a_n)$ such that,
		for all sufficiently large $t$,
		\begin{equation}\label{eq:eventual-leader-order}
			Y_{a_1}(t)>Y_{a_2}(t)>\cdots>Y_{a_n}(t),
		\end{equation}
		and
		\begin{equation}\label{eq:leader-velocity-limit}
			\frac{Y_{a_k}(t)}{t}\longrightarrow v_k,
			\qquad k\in[n].
		\end{equation}
		
		\smallskip
		\noindent\emph{(ii) Integrated limit shape.}
		Let
		\begin{equation}\label{eq:particle-count}
			N_t(x)=\sum_{y>x}\chi_t(y),
			\qquad x\in\Z.
		\end{equation}
		Then, almost surely,
		\begin{equation}\label{eq:uniform-height-limit}
			\sup_{u\in\R}
			\abs{\frac{N_t(\floor{ut})}{t}-\cH(u)}
			\longrightarrow0,
		\end{equation}
		where
		\begin{equation}\label{eq:limit-height-function}
			\cH(u)=\frac1n\sum_{k=1}^n\pospart{v_k-u}.
		\end{equation}
		In particular, away from the break points $v_1,\ldots,v_n$, the macroscopic
		density $\rho(u)=-\cH'(u)$ is
		\begin{equation}\label{eq:density-profile}
			\rho(u)=
			\begin{cases}
				1,&u<v_n,\\[1mm]
				\dfrac{K}{n},&v_{K+1}<u<v_K,
				\quad 1\le K\le n-1,\\[2mm]
				0,&u>v_1.
			\end{cases}
		\end{equation}
		
		\smallskip
		\noindent\emph{(iii) Local equilibrium.}
		Let $x_t\in\Z$ be deterministic and satisfy $x_t/t\to u$, where
		$u\notin\{v_1,\ldots,v_n\}$.  Let $K=K(u)$ be the unique integer in
		$\{0,\ldots,n\}$ such that $\rho(u)=K/n$.  Then, for every fixed
		$N\ge1$,
		\begin{equation}\label{eq:local-equilibrium}
			\bigl(\chi_t(x_t+j)\bigr)_{j=0}^{N-1}
			\ \Longrightarrow\ 
			\bigl(\eta(j)\bigr)_{j=0}^{N-1},
			\qquad \eta\sim\nu_{n,K}.
		\end{equation}
		Equivalently, for every bounded function $f:\{0,1\}^N\to\R$,
		\begin{equation}\label{eq:local-equilibrium-test-functions}
			\lim_{t\to\infty}
			\Ee\left[f\bigl((\chi_t(x_t+j))_{j=0}^{N-1}\bigr)\right]
			=\int f\bigl((\eta(j))_{j=0}^{N-1}\bigr)\,\nu_{n,K}(d\eta).
		\end{equation}
	\end{theorem}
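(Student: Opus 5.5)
The plan is to work on the event that the walk eventually stays in the safe region, to transfer everything there to the ring ASEP through Lemma~\ref{lem:safe-region-coupling}, and to read off (i)--(iii) from Lemmas~\ref{lem:ring-velocity} and~\ref{lem:ring-mixing}. The one technical care needed is that $T_\mathrm{sep}$ from Proposition~\ref{prop:eventual-separation} is not a stopping time. I will therefore work with integer times $m$ and the events
\[
G_m=\{w_s\in\cD\ \text{for all } s\ge m\}.
\]
These increase in $m$, and $\Pp(\bigcup_m G_m)=1$ by Proposition~\ref{prop:eventual-separation}.

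\textbf{Step 1: part (i).} For each integer $m$, on $\{w_m\in\cD\}$ start a ring ASEP at time $m$ from $\Omega(w_m)$, driven by the clocks after time $m$. By Lemma~\ref{lem:safe-region-coupling}, on $G_m$ we have $Y_{a_k}(t)=Y_{a_k}(m)+X^{(m)}_k(t-m)$ for all $t\ge m$, where $(a_k)$ is the right-to-left order at time $m$.

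The order $(a_k)$ cannot change while the walk stays in $\cD$. Indeed, each move displaces every leader by at most $1$, so two leaders at distance $>n$ cannot cross without first coming within distance $n$. This gives \eqref{eq:eventual-leader-order}.

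Lemma~\ref{lem:ring-velocity} applies, conditionally on $\cF_m$, to each of the countably many ring processes, since it holds for every initial ring state. It gives $X^{(m)}_k(t)/t\to v_k$ almost surely. Hence \eqref{eq:leader-velocity-limit} holds on every $G_m$, and therefore almost surely.

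\textbf{Step 2: part (ii), a deterministic identity.} By \eqref{eq:leader-class-periodicity}, the site $Y_i+jn$ carries label $i+jn$. For $i\in[n]$ this label is $\le 0$ exactly when $j\le -1$. So the particles of $\chi_t$ are precisely the sites $Y_i(t)-jn$ with $i\in[n]$ and $j\ge1$, and
\[
N_t(x)=\sum_{i=1}^n\#\{j\ge1:\ Y_i(t)-jn>x\}.
\]
Each summand differs from $\pospart{Y_i(t)-x}/n$ by at most $1$, uniformly in $x$. Therefore
\[
\sup_u\Bigl|\tfrac{N_t(\floor{ut})}{t}-\tfrac1n\sum_i\pospart{\tfrac{Y_i(t)}{t}-u}\Bigr|\le \tfrac{2n}{t}.
\]
Since $\pospart{\cdot}$ is $1$-Lipschitz, part (i) gives \eqref{eq:uniform-height-limit}. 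The density formula \eqref{eq:density-profile} is just differentiation of $\cH$.

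\textbf{Step 3: part (iii).} Fix $u$ with $v_{K+1}<u<v_K$ (with the obvious conventions when $K=0$ or $K=n$). By Step 2, $\chi_t(y)=1$ if and only if the leader in residue class $y \bmod n$ satisfies $Y>y$. Consider the event that $\min_k|Y_{a_k}(t)-x_t|>N$. On this event, for every $0\le j<N$, the site $x_t+j$ is occupied if and only if its residue lies in
\[
B_t=\{\text{residues of }Y_{a_1}(t),\dots,Y_{a_K}(t)\}.
\]
By part (i) this event has probability tending to $1$. In the ring picture, on $G_m$ the set $B_t$ is exactly the set of ring positions of labels $1,\dots,K$ in the ring process at time $t-m$.

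Projecting the multispecies ring ASEP to labels $\le K$ gives the one-species $K$-particle ring ASEP, whose stationary law is uniform on $K$-subsets. Such a uniform law is rotation invariant, so the shift by $x_t \bmod n$ is harmless.

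Now fix a bounded $f$ and split
\[
\Ee f(\cdot)=\Ee[f;G_m\cap\{\text{window away from leaders}\}]+O\bigl(\Pp(G_m^c)+o(1)\bigr).
\]
On the main event, replace the window by the ring-derived quantity, which is defined without reference to $G_m$. Its conditional expectation given $\cF_m$ is governed by $P_{t-m}(\Omega(w_m),\cdot)$. By Lemma~\ref{lem:ring-mixing} this expectation tends to $\int f\,d\nu_{n,K}$ uniformly in the initial word. Letting $t\to\infty$ and then $m\to\infty$ proves \eqref{eq:local-equilibrium-test-functions}.

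\textbf{Main obstacle.} The delicate point is this last exchange of limits. Conditioning on $G_m$, which is not $\cF_m$-measurable, would bias the ring law. This is why the estimate is arranged as above: the ring quantity is used unconditionally, and the discrepancy with the true window is paid for by $\Pp(G_m^c)$, which tends to $0$ as $m\to\infty$.
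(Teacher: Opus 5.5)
Your proposal is correct and follows the paper's proof essentially step for step. Your $G_m$ and the ring processes restarted at deterministic integer times are the paper's $A_T$ and $\omega^{(T)}$. Step 2 is the same residue-class count, and Step 3 is the same unconditional comparison with the ring block, paid for by the uniform mixing bound $d(t-m)$ plus $\Pp(G_m^c)$.

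Two small fixes are needed. First, in Step 3 the event should be the signed one that part (i) actually provides: $Y_{a_k}(t)>x_t+N$ for $k\le K$ and $Y_{a_k}(t)<x_t-N$ for $k>K$. The condition $\min_k|Y_{a_k}(t)-x_t|>N$ alone does not identify the occupied residues with $B_t$. Second, start the ring process from some fixed word off $\{w_m\in\cD\}$, so that the ring-derived block is defined everywhere.
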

	
	\begin{proof}
		We prove the three parts in turn.
		
		\medskip
		\noindent\emph{Step 1: asymptotic velocities of the leaders.}
		For each deterministic integer $T\ge0$, construct an auxiliary ring ASEP
		from the graphical marks after time $T$.  If $w_T\in\cD$, order the leaders
		from right to left, form the ring word $\Omega(w_T)$ as in
		\eqref{eq:safe-ring-word}, and start the ring process from this state.  If
		$w_T\notin\cD$, start it from an arbitrary fixed state.  Denote the lifted
		displacement of ring particle $k$ in this auxiliary process by
		$X_k^{(T)}(s)$.
		
		Conditionally on $\cF_T$, the initial ring state is fixed and the future
		marks are independent of $\cF_T$.  Lemma~\ref{lem:ring-velocity} therefore
		implies, with conditional probability one,
		\[
		\frac{X_k^{(T)}(s)}s\longrightarrow v_k,
		\qquad k\in[n].
		\]
		For each fixed $T$ this holds on an event of probability one; intersecting
		these events over $T\in\Z_{\ge0}$ preserves probability one.
		
		By Proposition~\ref{prop:eventual-separation}, almost surely there is an
		integer $T$ such that $w_t\in\cD$ for every $t\ge T$.  Fix such a $T$ on the
		full-probability event just constructed, and write $a_1,\ldots,a_n$ for the
		right-to-left order of the leaders at time $T$.  This order cannot change
		while the walk remains in $\cD$.  Lemma~\ref{lem:safe-region-coupling}
		gives, for every $s\ge0$,
		\begin{equation}\label{eq:theorem-safe-coupling}
			Y_{a_k}(T+s)-Y_{a_k}(T)=X_k^{(T)}(s),
			\qquad k\in[n].
		\end{equation}
		Dividing by $T+s$ and letting $s\to\infty$ proves
		\eqref{eq:leader-velocity-limit}, and hence part~(i).
		
		\medskip
		\noindent\emph{Step 2: the integrated limit shape.}
		For each $i\in[n]$, the particles whose labels are congruent to $i$ modulo
		$n$ have labels and positions
		\begin{equation}\label{eq:class-particles}
			i+nk
			\quad\text{and}\quad
			Y_i(t)+nk,
			\qquad k\in\Z,
		\end{equation}
		respectively.  Since $i+nk\le0$ if and only if $k\le-1$, the contribution of
		this residue class to $N_t(x)$ is
		\begin{equation}\label{eq:class-count-exact}
			\#\{k\le-1:Y_i(t)+nk>x\}
			=\pospart{\ceil{\frac{Y_i(t)-x}{n}}-1}.
		\end{equation}
		For every $z\in\R$,
		\[
		\abs{\pospart{\ceil{z/n}-1}-\pospart{z/n}}\le1.
		\]
		Summing over the $n$ residue classes therefore gives
		\begin{equation}\label{eq:height-approx-leaders}
			\abs{N_t(x)-\frac1n\sum_{i=1}^n\pospart{Y_i(t)-x}}
			\le n.
		\end{equation}
		Since $(a-u)_+$ is $1$-Lipschitz in each variable,
		\eqref{eq:height-approx-leaders}, part~(i), and
		$\abs{\floor{ut}/t-u}\le t^{-1}$ yield
		\begin{align*}
			&\sup_{u\in\R}
			\abs{\frac{N_t(\floor{ut})}{t}
				-\frac1n\sum_{k=1}^n\pospart{v_k-u}}\\
			&\qquad\le \frac nt+\frac1t
			+\max_{1\le k\le n}
			\abs{\frac{Y_{a_k}(t)}t-v_k}
			\longrightarrow0.
		\end{align*}
		This proves \eqref{eq:uniform-height-limit}.  Formula
		\eqref{eq:density-profile} follows by differentiating
		\eqref{eq:limit-height-function} away from its break points.
		
		\medskip
		\noindent\emph{Step 3: local equilibrium.}
		For a deterministic integer $T\ge0$, set
		\begin{equation}\label{eq:AT-definition}
			A_T=\{w_s\in\cD\text{ for every }s\ge T\}.
		\end{equation}
		The events $A_T$ increase with $T$, and Proposition
		\ref{prop:eventual-separation} implies
		\begin{equation}\label{eq:AT-probability}
			\Pp(A_T)\longrightarrow1
			\qquad(T\to\infty).
		\end{equation}
		Let $\omega^{(T)}$ be the auxiliary ring process from Step~1 and define its
		$K$-particle projection by
		\begin{equation}\label{eq:ring-K-projection}
			\zeta_s^{(T,K)}(r)
			=\one_{\{\omega_s^{(T)}(r)\le K\}},
			\qquad r\in\Z/n\Z.
		\end{equation}
		
		On $A_T$, let $a_1,\ldots,a_n$ be the right-to-left order of the leaders at
		time $T$.  The safe-region coupling implies
		\begin{equation}\label{eq:ring-label-at-leader-residue}
			\omega_{t-T}^{(T)}\bigl(\modn{Y_{a_k}(t)}\bigr)=k,
			\qquad k\in[n],\quad t\ge T.
		\end{equation}
		Indeed, ring particle $k$ starts at residue
		$\modn{Y_{a_k}(T)}$ and its lifted displacement is
		$Y_{a_k}(t)-Y_{a_k}(T)$.
		
		Fix $y\in\Z$, and let $k$ be the unique index for which
		$y\equiv Y_{a_k}(t)\pmod n$.  For a unique $m\in\Z$,
		\[
		y=Y_{a_k}(t)+nm,
		\qquad
		w_t(y)=a_k+nm.
		\]
		As $a_k\in[n]$, it follows that
		\begin{equation}\label{eq:occupation-via-leader}
			\chi_t(y)=1
			\quad\Longleftrightarrow\quad
			m\le-1
			\quad\Longleftrightarrow\quad
			Y_{a_k}(t)\ge y+n.
		\end{equation}
		Use the conventions $v_0=+\infty$ and $v_{n+1}=-\infty$.  By the definition
		of $K$,
		\[
		v_K>u>v_{K+1}.
		\]
		Part~(i), $x_t/t\to u$, and the fact that $N$ is fixed imply that, almost
		surely on $A_T$, for all sufficiently large $t$, simultaneously for
		$0\le j<N$,
		\begin{equation}\label{eq:block-coupling}
			\chi_t(x_t+j)
			=\zeta_{t-T}^{(T,K)}\bigl(\modn{x_t+j}\bigr).
		\end{equation}
		
		It remains to pass from this pathwise comparison to the unconditional
		limiting law. Set
		\[
		d(s)=\sup_{\omega\in S_n}
		\norm{P_s(\omega,\cdot)-\pi_n^q}_{\TV}.
		\]
		By Lemma~\ref{lem:ring-mixing}, $d(s)\to0$ as $s\to\infty$.
		The projection $\omega\mapsto(\one_{\{\omega(r)\le K\}})_{r\in\Z/n\Z}$
		sends $\pi_n^q$ to the uniform law on $K$-particle ring configurations.
		
		For fixed $T$, define the two blocks
		\[
		B_t=\bigl(\chi_t(x_t+j)\bigr)_{j=0}^{N-1},\qquad
		B_t^{(T)}=
		\bigl(\zeta_{t-T}^{(T,K)}(\modn{x_t+j})\bigr)_{j=0}^{N-1},
		\qquad t\ge T.
		\]
		The initial state of $\omega^{(T)}$ is $\cF_T$-measurable, and its driving
		marks after time $T$ are independent of $\cF_T$. Thus its unconditional
		law at time $t-T$ is within $d(t-T)$ of $\pi_n^q$ in total variation.
		For a bounded function $f:\{0,1\}^N\to\R$, write
		\[
		c_f=\int f\bigl((\eta(j))_{j=0}^{N-1}\bigr)\,\nu_{n,K}(d\eta).
		\]
		Rotation invariance of the uniform ring law handles the possibly varying
		residue of $x_t$, and gives
		\[
		\left|\Ee[f(B_t^{(T)})]-c_f\right|
		\le 2\norm{f}_\infty d(t-T).
		\]
		On $A_T$, equation~\eqref{eq:block-coupling} holds for all sufficiently
		large $t$, almost surely. Dominated convergence therefore implies
		\[
		\limsup_{t\to\infty}\Pp(B_t\ne B_t^{(T)})\le\Pp(A_T^c).
		\]
		Combining these estimates, for every fixed deterministic integer $T$,
		\[
		\limsup_{t\to\infty}\left|\Ee[f(B_t)]-c_f\right|
		\le2\norm{f}_\infty\Pp(A_T^c).
		\]
		Finally let $T\to\infty$ and use \eqref{eq:AT-probability}. This proves
		\eqref{eq:local-equilibrium-test-functions}, and hence part~(iii).
	\end{proof}
	
	\begin{corollary}[One- and two-site local statistics]
		\label{cor:two-site-local-equilibrium}
		Under the assumptions of Theorem~\ref{thm:limit-shape-step}(iii),
		\begin{equation}\label{eq:one-point-limit}
			\Pp\bigl(\chi_t(x_t)=1\bigr)\longrightarrow\frac Kn=\rho(u).
		\end{equation}
		Moreover,
		\begin{equation}\label{eq:two-site-local-equilibrium}
			\bigl(\chi_t(x_t),\chi_t(x_t+1)\bigr)
			\ \Longrightarrow\ (\eta(0),\eta(1)),
			\qquad \eta\sim\nu_{n,K},
		\end{equation}
		and the limiting probabilities are
		\begin{align}
			\Pp_{\nu_{n,K}}(11)
			&=\frac{K(K-1)}{n(n-1)},
			\label{eq:adjacent-two-point-limit}\\
			\Pp_{\nu_{n,K}}(10)
			=\Pp_{\nu_{n,K}}(01)
			&=\frac{K(n-K)}{n(n-1)},
			\label{eq:adjacent-mixed-two-point-limit}\\
			\Pp_{\nu_{n,K}}(00)
			&=\frac{(n-K)(n-K-1)}{n(n-1)}.
			\label{eq:adjacent-empty-two-point-limit}
		\end{align}
	\end{corollary}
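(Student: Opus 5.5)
This is a direct specialization of Theorem~\ref{thm:limit-shape-step}(iii), followed by an elementary computation under $\nu_{n,K}$. The plan has three parts: the one-point limit, the two-point limit, and the explicit probabilities.

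For \eqref{eq:one-point-limit}, I apply \eqref{eq:local-equilibrium-test-functions} with $N=1$ and $f(x)=x$, which is bounded on $\{0,1\}$. This gives $\Pp(\chi_t(x_t)=1)\to\nu_{n,K}(\eta(0)=1)$. Under $\nu_{n,K}$ the set $B$ is a uniform $K$-subset of $\Z/n\Z$, so the residue $\modn 0$ lies in $B$ with probability $K/n$. By the definition of $K$ in Theorem~\ref{thm:limit-shape-step}(iii), this equals $\rho(u)$.

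For \eqref{eq:two-site-local-equilibrium}, I take $N=2$ in \eqref{eq:local-equilibrium}; no further argument is needed.

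It remains to evaluate the four two-site probabilities. Since $n\ge2$, the residues $\modn0$ and $\modn1$ are distinct, so the pair $(\eta(0),\eta(1))$ is the indicator of two distinct fixed points belonging to a uniform random $K$-subset of an $n$-set. Sampling without replacement then gives
\[
\Pp(11)=\frac{\binom{n-2}{K-2}}{\binom nK}=\frac{K(K-1)}{n(n-1)},
\qquad
\Pp(10)=\frac{\binom{n-2}{K-1}}{\binom nK}=\frac{K(n-K)}{n(n-1)}.
\]
By symmetry between the two points, $\Pp(01)=\Pp(10)$. The value of $\Pp(00)$ follows by exchanging the roles of particles and holes ($K\mapsto n-K$) in the formula for $\Pp(11)$. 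The boundary cases $K=0,1,n-1,n$ need no separate treatment, because the binomial conventions ($\binom{m}{j}=0$ for $j<0$ or $j>m$) make the displayed formulas correct there as well. As a consistency check, the four probabilities sum to $1$.

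I do not expect a real obstacle. The only point requiring attention is that $\modn 0\ne\modn 1$, which is where $n\ge2$ is used.
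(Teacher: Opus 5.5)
Your proposal is correct and matches the paper's proof: you apply Theorem~\ref{thm:limit-shape-step}(iii) with $N=1$ and $N=2$, use $n\ge2$ to ensure that $\modn0\ne\modn1$, and read off the probabilities from a uniform $K$-element subset of the $n$ residues. Your explicit binomial computations and the check of the boundary cases fill in details that the paper leaves to the reader.
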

	
	\begin{proof}
		Apply Theorem~\ref{thm:limit-shape-step}(iii) with $N=1$ and $N=2$.
		The two adjacent sites represent distinct residues because $n\ge2$.
		The displayed probabilities follow by choosing a uniform $K$-element subset
		of the $n$ residues.
	\end{proof}
	
	Theorem \ref{thm:limit-shape-step} implies that the internal order at positions from 1 to N converges to the stationary ring ASEP measure. Therefore, the following fact directly follows from the color-position symmetry.
	\begin{corollary}[Distribution of the limiting leader order]
		\label{rem:leader-order-distribution}
		Let the affine Hecke walk start from the identity. For $w\in\wtS$, let $L(w)\in S_n$ record the left-to-right rank\footnote{ $L(w)$ coincides with the ranking function $\rho$ used in the proof of Proposition~\ref{prop:reach-separated}.} of each
		leader:
		\begin{equation}\label{eq:leader-rank-permutation}
			L(w)(i)=1+\#\{j\in[n]:w^{-1}(j)<w^{-1}(i)\},
			\qquad i\in[n].
		\end{equation}
		By Proposition~\ref{prop:eventual-separation}, $L(w_t)$ is eventually
		constant almost surely; denote its terminal value by $L_\infty$. One has
		\begin{equation}\label{eq:leader-order-stationary-law}
			\Law(L_\infty)=\pi_n^q,
		\end{equation}
	\end{corollary}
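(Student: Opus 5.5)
The plan is to transfer the statement, via the color-position symmetry (Lemma~\ref{lem:color-position-symmetry}), from the ranks of the leader \emph{positions} to the ranks of the \emph{labels} sitting at positions $1,\dots,n$. The label ranks are then read off from the auxiliary ring process of Step~3 in the proof of Theorem~\ref{thm:limit-shape-step}.

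\emph{Step 1: reduce to a fixed-time law.} By Proposition~\ref{prop:eventual-separation}, $L(w_t)=L_\infty$ for all large $t$ almost surely. Hence $\Law(L(w_t))\to\Law(L_\infty)$, and it suffices to show $\Law(L(w_t))\to\pi_n^q$.

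By \eqref{eq:leader-rank-permutation}, $L(w)(i)$ is the rank of $w^{-1}(i)$ among $w^{-1}(1),\dots,w^{-1}(n)$. Therefore $L(w^{-1})(r)$ is the rank of the label $w(r)$ among the labels $w(1),\dots,w(n)$ at positions $1,\dots,n$. Lemma~\ref{lem:color-position-symmetry} gives $L(w_t)\stackrel{d}{=}L(w_t^{-1})$. So it suffices to prove that the word $R_t$ defined by
\[
R_t(r)=\text{rank of }w_t(r)\text{ among }w_t(1),\dots,w_t(n),\qquad r\in[n],
\]
converges in law to $\pi_n^q$.

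\emph{Step 2: identify $R_t$ with the auxiliary ring process on $A_T$.} Fix a deterministic $T$ and work on $A_T=\{w_s\in\cD \text{ for all } s\ge T\}$, with the auxiliary ring process $\omega^{(T)}$ and right-to-left order $a_1,\dots,a_n$. As in the proof of Lemma~\ref{lem:safe-region-coupling}, write $b_i=Y_i(t)-i$. A position $y$ in the class of leader $a_k$ carries label $y-b_{a_k}$.

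For positions $r,s\in[n]$ in classes $a_k,a_\ell$ with $k<\ell$, the labels satisfy
\[
(r-b_{a_k})-(s-b_{a_\ell})=(r-s)-\bigl(Y_{a_k}(t)-Y_{a_\ell}(t)\bigr)+(a_k-a_\ell).
\]
By Theorem~\ref{thm:limit-shape-step}(i), $Y_{a_k}(t)-Y_{a_\ell}(t)\ge (v_k-v_\ell)t/2$ for large $t$, which eventually exceeds $2n$. Hence almost surely on $A_T$, for all large $t$, the label at a position of class $a_k$ is smaller than the label at a position of class $a_\ell$ whenever $k<\ell$. Thus the rank of $w_t(r)$ equals the right-to-left rank $k$ of its class.

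By \eqref{eq:ring-label-at-leader-residue}, that rank is exactly $\omega^{(T)}_{t-T}(r)$. So $R_t=\omega^{(T)}_{t-T}$ eventually on $A_T$, with positions $1,\dots,n$ identified with the residues.

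\emph{Step 3: pass to the limit.} Conditionally on $\cF_T$, $\omega^{(T)}$ is a ring ASEP started from an $\cF_T$-measurable state, so $\norm{\Law(\omega^{(T)}_{t-T})-\pi_n^q}_{\TV}\le d(t-T)\to0$ by Lemma~\ref{lem:ring-mixing}. As in Step~3 of the theorem, dominated convergence gives
\[
\limsup_{t\to\infty}\norm{\Law(R_t)-\pi_n^q}_{\TV}\le \Pp(A_T^c).
\]
Letting $T\to\infty$ and using \eqref{eq:AT-probability} shows $\Law(R_t)\to\pi_n^q$, which completes the argument.

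The main obstacle I expect is bookkeeping of conventions rather than a new idea. One must check that the inversion really exchanges ``rank of leader position'' with ``rank of label at position $r$''. One must also check that the right-to-left leader order, with $k=1$ the rightmost leader, matches smallest label $\leftrightarrow$ ring label $1$ under the orientation of \eqref{eq:ring-rates}, so that the limit is $\pi_n^q$ itself and not its image under label reversal or a rotation. I would verify this first on the case $n=2$. Any residual rotation is harmless because $\pi_n^q$ is rotation invariant.
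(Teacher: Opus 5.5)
Your proposal is correct and follows the same route as the paper. The paper also combines convergence of the internal order of labels at positions $1,\dots,n$ to $\pi_n^q$ (coming from the auxiliary ring process in Step~3 of the proof of Theorem~\ref{thm:limit-shape-step}) with the color-position symmetry $L(w_t)\stackrel{d}{=}L(w_t^{-1})$; your Step~2 usefully makes explicit what the paper's one-line justification leaves implicit, namely that the label ranks at positions $1,\dots,n$ eventually equal the ring word because the gaps $Y_{a_k}(t)-Y_{a_\ell}(t)$ grow linearly, and this is independent of the macroscopic location, so the fact that $0=v_{(n+1)/2}$ is a break point for odd $n$ causes no problem.
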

	
	\subsection{Second class particles}\label{subsec:second-class-particles}
	
	We use the ordered alphabet $\{1<2<+\infty\}$, where $1$ denotes a first
	class particle, $2$ a second class particle, and $+\infty$ a hole, and the
	projection
	\[
	\varphi(j)=
	\begin{cases}
		1,&j\le-1,\\
		2,&j=0,\\
		+\infty,&j\ge1.
	\end{cases}
	\]
	Under this projection, the affine Hecke walks started from $\id$ and from
	$s_0$ give the two initial conditions considered below.
	
	We shall also use the following elementary diagonal translation invariance.
	For $m\in\Z$, define
	\[
	(\Theta_m w)(x)=w(x+m)-m,
	\qquad x\in\Z.
	\]
	This is conjugation by the translation $x\mapsto x+m$; in particular,
	$\Theta_m$ maps $\wtS$ to itself and fixes the identity.  It conjugates the
	update at bond class $r$ into the update at bond class $r-m$.  Since the
	marked clocks of all bond classes have the same law,
	\begin{equation}\label{eq:diagonal-shift-invariance}
		\Theta_m w_t\ \stackrel d=\ w_t,
		\qquad m\in\Z,
	\end{equation}
	for the identity-started affine Hecke walk.
	
	\begin{theorem}[Second class particle at the step]
		\label{thm:second-class-step}
		Consider the periodic two-species ASEP with initial condition
		\begin{equation}\label{eq:second-class-step-initial}
			\eta_0(x)=
			\begin{cases}
				1,&x\le-1,\\
				2,&x=0,\\
				+\infty,&x\ge1.
			\end{cases}
		\end{equation}
		Let $Q(t)$ be the position of its unique second class particle.  For every
		$t\ge0$ and $x\in\Z$,
		\begin{equation}\label{eq:second-class-exact-identity}
			\Pp\bigl(Q(t)\le x\bigr)
			=\Pp\bigl(\chi_t(-x)=1\bigr),
		\end{equation}
		where $(\chi_t)$ is the one-species process of
		Theorem~\ref{thm:limit-shape-step}.  Consequently, if $x_t/t\to u$ and
		$u\notin\{-v_1,\ldots,-v_n\}$, then
		\begin{equation}\label{eq:second-class-step-limit}
			\Pp\bigl(Q(t)\le x_t\bigr)\longrightarrow\rho(-u).
		\end{equation}
		Equivalently,
		\begin{equation}\label{eq:second-class-uniform-speed}
			\frac{Q(t)}t\ \Longrightarrow\ V,
			\qquad
			\Pp(V=v_k)=\frac1n,
			\quad k\in[n].
		\end{equation}
	\end{theorem}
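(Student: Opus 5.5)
The plan is to identify $Q(t)$ with the position of a single label in the affine Hecke walk, and then move from positions to labels using Lemma~\ref{lem:color-position-symmetry}. Let $(w_t)$ be the affine Hecke walk started from $\id$. Under the projection $\varphi$, the second class particle is the particle carrying label $0$, so
\[
Q(t)=w_t^{-1}(0).
\]

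\textbf{Step 1: the exact identity.} Lemma~\ref{lem:color-position-symmetry} gives $w_t^{-1}\stackrel d=w_t$, hence
\[
\Pp(Q(t)\le x)=\Pp(w_t(0)\le x).
\]
Next apply the diagonal shift $\Theta_x$, for which $(\Theta_x w)(-x)=w(0)-x$. Thus $\{w_t(0)\le x\}=\{(\Theta_x w_t)(-x)\le 0\}$. By \eqref{eq:diagonal-shift-invariance} this event has the same probability as $\{w_t(-x)\le 0\}=\{\chi_t(-x)=1\}$. This proves \eqref{eq:second-class-exact-identity}.

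\textbf{Step 2: the limit of the distribution function.} Take $x_t$ with $x_t/t\to u$ and $u\notin\{-v_1,\ldots,-v_n\}$. Then $-x_t/t\to -u$, and $-u$ is not a break point. Corollary~\ref{cor:two-site-local-equilibrium}, equation \eqref{eq:one-point-limit}, applied with the deterministic sequence $-x_t$, gives
\[
\Pp(\chi_t(-x_t)=1)\to\rho(-u),
\]
which is \eqref{eq:second-class-step-limit}.

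\textbf{Step 3: identify the limit law.} By \eqref{eq:density-profile}, $\rho(-u)=K/n$, where $K=\#\{k:v_k>-u\}$. The velocities are symmetric, $v_{n+1-k}=-v_k$, so
\[
K=\#\{k:-v_k<u\}=\#\{j:v_j<u\}.
\]
Hence $\rho(-u)=\Pp(V<u)=\Pp(V\le u)$ for $V$ uniform on $\{v_1,\ldots,v_n\}$, since $u$ is not an atom.

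\textbf{Step 4: convergence in law.} Fix $u$ off the finite atom set and take $x_t=\floor{ut}$. Then $\Pp(Q(t)/t\le u)=\Pp(Q(t)\le\floor{ut})$ converges to $\Pp(V\le u)$. Convergence of distribution functions at all continuity points of the limit gives $Q(t)/t\Rightarrow V$.

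The main care point is bookkeeping of signs and conventions rather than any analytic difficulty. This includes the direction of $\Theta_m$, the identification of label $0$ with the second class particle, and the reflection $u\mapsto -u$ combined with the symmetry of the $v_k$.
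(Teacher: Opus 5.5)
Your proposal is correct and follows the paper's proof essentially step for step. You identify $Q(t)=w_t^{-1}(0)$, use color-position symmetry to pass to $w_t(0)$, apply the diagonal shift $\Theta_x$ to reach $\Pp(w_t(-x)\le 0)$, and then use the one-point local equilibrium at $-u$ together with $v_{n+1-k}=-v_k$ to identify the uniform limit. Your Steps 3--4 only spell out the distribution-function bookkeeping that the paper compresses into one sentence, including the observation that $Q(t)\in\Z$ gives $\{Q(t)\le ut\}=\{Q(t)\le\floor{ut}\}$.
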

	
	\begin{proof}
		Let $w_t$ be the identity-started affine Hecke walk.  Under the three-color
		projection, the second class particle is the particle of color $0$, so
		$Q(t)=w_t^{-1}(0)$.  Color-position symmetry and
		\eqref{eq:diagonal-shift-invariance} give
		\begin{align*}
			\Pp(Q(t)\le x)
			&=\Pp(w_t^{-1}(0)\le x)
			=\Pp(w_t(0)\le x)\\
			&=\Pp\bigl((\Theta_xw_t)(-x)\le0\bigr)
			=\Pp(w_t(-x)\le0),
		\end{align*}
		which is \eqref{eq:second-class-exact-identity}.
		
		Now apply Theorem~\ref{thm:limit-shape-step}(iii) with $N=1$ at the
		macroscopic point $-u$ to obtain \eqref{eq:second-class-step-limit}.  Away from its jump points, the function $u\mapsto\rho(-u)$ agrees with
		the distribution function of the measure placing mass $1/n$ at each of the
		points $-v_k$, $k\in[n]$.  Since $v_{n+1-k}=-v_k$, this is the uniform
		measure on $\{v_1,\ldots,v_n\}$, proving
		\eqref{eq:second-class-uniform-speed}.
	\end{proof}
	
	\begin{theorem}[Second class particle in a one-site perturbation]
		\label{thm:shifted-second-class}
		Consider the periodic two-species ASEP with initial condition
		\begin{equation}\label{eq:shifted-second-class-initial}
			\widetilde\eta_0(x)=
			\begin{cases}
				1,&x\le-1,\\
				+\infty,&x=0,\\
				2,&x=1,\\
				+\infty,&x\ge2.
			\end{cases}
		\end{equation}
		Let $\widetilde Q(t)$ be the position of its unique second class particle.
		For every $t\ge0$ and $x\in\Z$,
		\begin{align}
			\Pp\bigl(\widetilde Q(t)\le x\bigr)
			&=\Pp\bigl(\chi_t(-x)=1,\ \chi_t(1-x)=1\bigr)\notag\\
			&\quad+q\,\Pp\bigl(\chi_t(-x)=0,\ \chi_t(1-x)=1\bigr)
			\label{eq:shifted-second-class-exact-identity}\\
			&=q\,\Pp\bigl(\chi_t(1-x)=1\bigr)
			+(1-q)\Pp\bigl(\chi_t(-x)=\chi_t(1-x)=1\bigr).
			\notag
		\end{align}
		Suppose that $x_t/t\to u$ and
		$u\notin\{-v_1,\ldots,-v_n\}$.  Let $K=K(-u)$ be characterized by
		$\rho(-u)=K/n$.  Then
		\begin{equation}\label{eq:shifted-second-class-limit}
			\Pp\bigl(\widetilde Q(t)\le x_t\bigr)
			\longrightarrow
			F_{n,q}(u)
			:=q\frac Kn+(1-q)\frac{K(K-1)}{n(n-1)}.
		\end{equation}
		Equivalently, $\widetilde Q(t)/t\Rightarrow\widetilde V$, where
		$\widetilde V$ is supported on $\{v_1,\ldots,v_n\}$ and
		\begin{equation}\label{eq:shifted-second-class-atoms-direct}
			\Pp(\widetilde V=v_k)
			=\frac qn+\frac{2(1-q)(n-k)}{n(n-1)},
			\qquad k\in[n].
		\end{equation}
	\end{theorem}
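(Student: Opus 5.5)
The approach is to realize the shifted initial condition as the projection of the affine Hecke walk started from $s_0$, and then use the Hecke algebra to express its law through the identity-started walk.

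\emph{Step 1: Hecke algebra rewriting.} Under $\varphi$, the walk started from $w_0=s_0$ gives $\widetilde\eta_0$. Indeed, $s_0$ swaps positions $0,1$, so label $0$ sits at position $1$ and label $1$ (a hole) sits at position $0$. Hence $\widetilde Q(t)=w_t^{-1}(0)$ for this walk. The time-$t$ law of the walk started from $w$ is encoded by $T_w\,E_t$, where $E_t=\exp\bigl(t\sum_r(T_{s_r}-1)\bigr)$ (formally, as a limit of finite sums); this is the coefficient expansion. So the law from $s_0$ is given by $T_{s_0}E_t$.

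\emph{Step 2: apply $\iota$.} The anti-involution fixes $E_t$ and $T_{s_0}$, so $\iota(T_{s_0}E_t)=E_tT_{s_0}$. Therefore
\[
\Pp_{s_0}(w_t^{-1}\in\cdot)=\Law\bigl(w_t' \text{ after one forced right multiplication by } s_0 \text{ in the Hecke sense}\bigr),
\]
where $w_t'$ is identity-started. By \eqref{eq:hecke-multiplication}, multiplying $T_{w}$ by $T_{s_0}$ gives $T_{ws_0}$ if $w(0)<w(1)$. Otherwise it gives $T_{ws_0}$ with probability $q$ and $T_w$ with probability $1-q$.

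\emph{Step 3: compute the event.} We need $\Pp(u(0)\le x)$ with $u=w_t'$ or $w_t's_0$. When $w(0)<w(1)$ we get $u(0)=w(1)$. When $w(0)>w(1)$ we get $u(0)=w(1)$ with probability $q$ and $u(0)=w(0)$ with probability $1-q$. Thus
\[
\Pp(\widetilde Q(t)\le x)=\Pp\bigl(\max(w(0),w(1))\le x\bigr)+q\,\Pp\bigl(w(1)\le x<w(0)\bigr),
\]
where the first term covers both the case $w(0)<w(1)$ with $w(1)\le x$ and the case $w(0)>w(1)$ with the $(1-q)$ choice combined appropriately. Redoing the bookkeeping gives $(1-q)\Pp(w(0),w(1)\le x)+q\Pp(w(1)\le x)$. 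Next apply $\Theta_x$ from \eqref{eq:diagonal-shift-invariance}: $w(0)\le x$ becomes $w(-x)\le 0$, i.e.\ $\chi_t(-x)=1$, and similarly for $w(1)$. This gives \eqref{eq:shifted-second-class-exact-identity}, and both displayed forms agree by splitting $\{\chi_t(1-x)=1\}$.

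\emph{Step 4: limits.} Apply Corollary~\ref{cor:two-site-local-equilibrium} at macroscopic point $-u$ with $x_t\mapsto -x_t$. This gives $qK/n+(1-q)K(K-1)/(n(n-1))$. For $-u$ between $v_{K+1}$ and $v_K$, i.e.\ $u\in(v_{n+1-K},v_{n-K})$, the atom at $v_k$ is the jump of $F$ from $K-1$ to $K$ with $K=n+1-k$. That jump is $q/n+2(1-q)(K-1)/(n(n-1))=q/n+2(1-q)(n-k)/(n(n-1))$. Weak convergence follows since the limiting distribution function is given off the atoms.

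The main obstacle is Step 2: justifying that the anti-involution turns a left factor $T_{s_0}$ into a right "one extra step" on the identity-started walk. This needs $E_t$ to be $\iota$-invariant and the infinite-dimensional exponential to make sense coefficientwise. I would handle it by conditioning on the Poisson number of rings and working with finite products of $T_{s_r}$, exactly as in the proof of Lemma~\ref{lem:color-position-symmetry}.
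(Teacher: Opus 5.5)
Your proposal is correct and follows essentially the same route as the paper's proof. You realize $\widetilde\eta_0$ from the walk started at $s_0$, use $\iota(T_{s_0}E_t)=E_tT_{s_0}$ to write $\widetilde w_t^{-1}$ as the identity-started walk followed by one stochastic update at bond class $0$, do the case analysis on $(\one_{\{w_t(0)\le x\}},\one_{\{w_t(1)\le x\}})$, pass to $(\chi_t(-x),\chi_t(1-x))$ by diagonal shift invariance, and conclude with the two-site local equilibrium and the jumps $F_K-F_{K-1}$; the only blemish is the muddled remark in Step~3 that the $(1-q)$ choice is ``combined appropriately'' (when $w(0)>w(1)$ and both values are $\le x$, both the $q$ and the $(1-q)$ branches give $u(0)\le x$), but the formula you display is right.
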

	
	\begin{proof}
		Let $\widetilde w_t$ be the affine Hecke walk started from $s_0$, and let
		\[
		H_t=\exp\left(t\sum_{r\in\Z/n\Z}(T_{s_r}-T_{\id})\right)
		\]
		be the Hecke-algebra transition element of the identity-started walk.  The
		law of $\widetilde w_t$ is encoded by $T_{s_0}H_t$.  Since
		$\iota(H_t)=H_t$ and $s_0^{-1}=s_0$, the anti-involution gives
		\begin{equation}\label{eq:shifted-inversion-algebra}
			\iota(T_{s_0}H_t)=H_tT_{s_0}.
		\end{equation}
		Consequently, $\widetilde w_t^{-1}$ has the law of the identity-started
		walk $w_t$ followed by one independent stochastic Hecke update at bond
		class $0$.
		
		The second class particle has color $0$, and hence
		$\widetilde Q(t)=\widetilde w_t^{-1}(0)$.  Before the final update, put
		\[
		\xi_j=\one_{\{w_t(j)\le x\}},
		\qquad j=0,1.
		\]
		If $(\xi_0,\xi_1)=11$, position $0$ is occupied after the update; if the
		pair is $10$ or $00$, it is not occupied.  For the pair $01$, the underlying
		colors form a descent, and position $0$ becomes occupied exactly when the
		exchange is accepted, which has probability $q$.  Therefore
		\[
		\Pp(\widetilde Q(t)\le x)
		=\Pp(\xi_0=\xi_1=1)+q\Pp(\xi_0=0,\xi_1=1).
		\]
		By diagonal translation invariance,
		\[
		(\xi_0,\xi_1)
		\ \stackrel d=\ 
		(\chi_t(-x),\chi_t(1-x)),
		\]
		which proves \eqref{eq:shifted-second-class-exact-identity}.  This is the
		periodic analogue of the finite-perturbation color--position argument in
		\cite[Section~5]{BorodinBufetovColorPosition}.
		
		Apply Corollary~\ref{cor:two-site-local-equilibrium} at the macroscopic
		point $-u$.  The one-site and $11$ limits give
		\[
		\lim_{t\to\infty}\Pp(\widetilde Q(t)\le x_t)
		=q\frac Kn+(1-q)\frac{K(K-1)}{n(n-1)},
		\]
		which is \eqref{eq:shifted-second-class-limit}.
		
		Write the right-hand side as $F_K$, $0\le K\le n$.  As $u$ increases
		through $-v_K$, the integer $K(-u)$ increases from $K-1$ to $K$, and
		\[
		F_K-F_{K-1}
		=\frac qn+\frac{2(1-q)(K-1)}{n(n-1)}.
		\]
		These differences are nonnegative, $F_0=0$, and $F_n=1$, so they are the
		atoms of the limiting distribution.  This proves
		\eqref{eq:shifted-second-class-atoms-direct}.
	\end{proof}
	
	% Later parts of Section 3 will be added in a subsequent version.

\end{document}